\newtheorem{theorem}{Theorem}[section]
\newtheorem{lemma}{Lemma}[section]
\newtheorem{proposition}{Proposition}[section]
\newtheorem{corollary}{Corollary}[section]
\newtheorem{remark}{Remark}[section]
\renewcommand{\le}{\leqslant}
\renewcommand{\P}{\mathbb{P}\,}
\newcommand{\E}{\mathbb{E}\,}
\newcommand{\var}{\mbox{\rm Var}\,}
\newcommand{\Rd}{\mathbb{R}^{d}}
\newcommand{\R}{\mathbb{R}}
\newcommand{\N}{\mathbb{N}}
\newcommand{\Sd}{{\bf S}^{d-1}}
\newcommand{\eqd}{\stackrel{d}{=}}
\newcommand{\Vol}{\mbox{\rm Vol}}
\def\cH{{\mathcal H}}
\def\I{{\bf 1} }
\def\l{{\lambda}}
\begin{document}

\title{\bf Set Reconstruction by Voronoi cells}

\author{\bf{Matthias Reitzner\footnote{Postal address: Institut f\"ur Mathematik, Universit\"at Osnabr\"uck, 49069 Osnabr\"uck, Germany. 
Email: matthias.reitzner@uni-osnabrueck.de}, 
Evgeny Spodarev\footnote{Partially supported by RFBR-DFG (09-0191331) and DFG (436 RUS 113/962/0-1 R) grants.
\newline  Postal address: Institut f\"ur Stochastik, Universit\"at Ulm, 89069 Ulm, Germany. 
Email: evgeny.spodarev@uni-ulm.de}, 
Dmitry Zaporozhets\footnote{Partially supported by RFBR (10-01-00242), NSh-4472.2010.1, RFBR-DFG (09-0191331)  and DFG (436 RUS 113/962/0-1 R) grants. 
\newline  Postal address: St.Petersburg Department of V.A.Steklov Mathematical Institute, Fontanka 27, 191023 St.Petersburg, Russia. 
Email: zap1979@gmail.com} }}

\date{\today}

\newcommand{\Per}{\mathop{\rm Per}\nolimits}
\newcommand{\Div}{\mathop{\rm div}\nolimits}


\maketitle
\begin{abstract}
For a Borel set $A$  and a homogeneous Poisson point process  $\eta$  in $\R^d$ of intensity $\l >0$, define the
Poisson--Voronoi approximation  $ A_\eta$ of  $A$  as a union of all Voronoi cells with nuclei from $\eta$ lying in $A$. If $A$ has a finite volume and perimeter we find an exact asymptotic of $\E\Vol(A\Delta A_\eta)$ as $\lambda\to\infty$ where $\Vol$ is the Lebesgue measure. Estimates for all moments of $\Vol(A_\eta)$ and $\Vol(A\Delta A_\eta)$ together with their asymptotics for large $\l$ are  obtained as well.
\end{abstract}

\noindent
AMS Subject classification: Primary 60D05; secondary 60G55, 52A22, 60C05.
\newline \noindent
{\em Key words and phrases.} 
Poisson point process, Poisson-Voronoi cell, Poisson-Voronoi tessellation, perimeter

\section{Introduction}\label{sect:Intro}

Let $A$  be a  Borel set in $\R^d$ and $\eta$ be a Poisson point process in $\R^d$. Assume that we observe $\eta$ and the only information about $A$ at our disposal is which points of $\eta$ lie in $A$, i.e., we have the partition of the process $\eta$ into $\eta \cap A$ and $\eta\setminus A$. We try to reconstruct the set $A$ just by the information contained in these two point sets. For that we approximate $A$ by the set $ A_\eta$ of all points in $\R^d$ which are closer to $\eta \cap A$ than to $\eta\setminus A$.

More formally, let $\eta$ be a homogeneous Poisson point process
of intensity $\l >0$, and denote by $\upsilon_\eta (x) = \{ z\in\R^d:\ \|
z-x \| \leqslant \| z-y \| \mbox{ for all } y \in \eta  \} $ the
Voronoi cell generated by $\eta$ with nucleus $x\in\eta$. Then the
set $ A_\eta$ is just the union of the Poisson--Voronoi cells with
nuclei lying in $A$, i.e.,
$$ A_\eta = \bigcup\limits_{x \in \eta \cap A} \upsilon_\eta (x).$$
We call this set the {\it Poisson--Voronoi approximation } of the
set $A$. It was first introduced by Khmaladze and Toronjadze in~\cite{KhTo}. They proposed $A_\eta$ to be an estimator for $A$ when $\lambda$ is large (potential applications are listed in \cite[Section 1]{HR09}). They conjectured that for arbitrary bounded Borel set $A\subset\R^d, d\geqslant1,$ it holds
$$
\Vol(A_\eta)\to\Vol(A),\quad\lambda\to\infty,
$$
\begin{equation}\label{2047}
\Vol(A\Delta A_\eta)\to0,\quad\lambda\to\infty,
\end{equation}
almost surely, where $\Vol(\cdot)$ stands for the Lebesgue measure (volume) and $\Delta$ is the operation of the symmetric difference of sets. This conjecture was proved in~\cite{KhTo} for $d=1$. The case of general $d$ was treated by Einmahl and E. V. Khmaladze in \cite{EiKh} with some technical assumption on the boundary of $A$, and then generalized  by Penrose in~\cite{Pen07} to an arbitrary bounded Borel set $A$.

It can be easily shown (see Section~\ref{sec:tools} for details) that for any Borel set $A$ it holds
$$
 \E \Vol( A_\eta)= \Vol(A).
$$
Thus $\Vol( A_\eta)$ is an unbiased estimator for the
volume of $A$. In this paper we also consider  the $n$--th moment of $\Vol( A_\eta)$ and approximate it by the $n$--th degree of the volume of the original set $\Vol^n( A)$ asymptotically as $\lambda\to\infty$ (Theorem~\ref{thm:asympt_volApprox}). For the case when $n=2$ and $A$ is a convex compact, similar estimates were obtained in \cite{HR09}.

It might be suggested from \eqref{2047} that
\begin{equation}\label{2140}
\E\Vol(A\Delta A_\eta)\to0,\quad\lambda\to\infty,
\end{equation}
although it is not a direct corollary. The more interesting problem is to find an exact asymptotic of  $\E\Vol(A\Delta A_\eta)$. Initially it was considered by Heveling and Reitzner in \cite{HR09}. They proved that for any compact convex set $A$ with surface area $S(A)$ it holds
$$
\mathbb{E}\Vol(A \Delta A_\eta)=c_d\cdot S(A)\cdot \lambda^{-1/d}(1+O(\lambda^{-1/d})),\quad\lambda\to\infty,
$$
where the constant $c_d$ independent of $\lambda$ and $A$ was calculated by them in an explicit form (see Section~\ref{sect:Results} for details).
Here we obtain a similar asymptotic formula (Theorem~\ref{thm:MeanVol}) for a much wider class of sets.
Namely, we consider Borel sets with finite volume $\Vol (A)$ and  perimeter $\Per(A)$ (see Section~\ref{sec:tools} for the precise definition). Our methods are completely different from those of Heveling and Reitzner. The key observations are the connection between the Poisson--Voronoi approximation and the covariogram of $A$, and the  connection between the covariogram and the perimeter of a set recently established by Galerne \cite{Gal11}. As a by-product of our calculations, we prove that \eqref{2140} holds for any Borel set $A$ with finite volume (Corollary~\ref{cor:L1conv}).

We also consider higher moments of $\Vol(A \Delta A_\eta)$. For arbitrary Borel set $A$ we
approximate $\E\Vol^n(A \Delta A_\eta)$ by the $n$-th degree of $\E\Vol(A \Delta A_\eta)$ asymptotically
 as $\lambda\to\infty$ (Theorem~\ref{0037}). Thus, assuming that $\Vol (A), \Per(A)<\infty$ and using
the asymptotic for $\E\Vol(A \Delta A_\eta)$ from Theorem~\ref{thm:MeanVol}, we obtain the asymptotic for $\E\Vol^n(A \Delta A_\eta)$ (Corollary~\ref{cor:asympt_moments}).

\medskip
The paper is organized as follows. Our main results are stated in
the next section. In Section~\ref{sec:tools}, we introduce the
necessary background and notation, in particular the perimeter and
the covariogram of a set $A$. Proofs are given in Section
\ref{sect:Proofs}.

\section{Main results}\label{sect:Results}

Our first result yields the asymptotic of the average volume of $A\Delta A_\eta$ with increasing intensity $\lambda$. To formulate it, we need to define a notion of perimeter of a Borel set. The definition is somewhat technical, so we postpone it till Section~\ref{sec:tools}. If $A$ is a compact set with Lipschitz boundary (e.g. a convex body), then $\Per(A)$ equals the $(d-1)$-dimensional Hausdorff measure
 $\mathcal{H}_{d-1}(\partial A)$ of the boundary $\partial A$ of $A$. In general case it holds $\Per(A)\leqslant\mathcal{H}_{d-1}(\partial A)$
 (see, e.g. \cite[Proposition 3.62]{AF00}). Therefore, $\Per(A)$ could be replaced by $\mathcal{H}_{d-1}(\partial A)$ in the assumptions of the theorem.

\begin{theorem}\label{thm:MeanVol}
If $A \subset \R^d $ is a Borel set with $\Vol(A)<\infty$ and $\Per(A)<\infty$, then
\begin{equation}\label{2340}
\mathbb{E}\Vol(A \Delta A_\eta)=c_d\cdot \Per(A)\cdot \lambda^{-1/d}(1+o(1)),\quad\lambda\to\infty,
\end{equation}
where $ c_d=2d^{-2}\Gamma(1/d)\kappa_{d-1}\kappa_d^{{-1-1/d}}$ and
$\kappa_n$ is the volume of the unit $n$-dimensional ball.
\end{theorem}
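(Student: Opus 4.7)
The plan is to express $\E\Vol(A\Delta A_\eta)$ as a weighted integral of the covariogram $g_A(h)=\Vol(A\cap(A+h))$ and then invoke Galerne's formula identifying the short-range behaviour of $g_A$ with $\Per(A)$. The first step is the observation that for a deterministic $z\in\R^d$ the nearest neighbour $N(z)$ of $z$ in the homogeneous Poisson process $\eta$ has density $y\mapsto\lambda\exp(-\lambda\kappa_d\|y-z\|^d)$, and that $z\in A_\eta$ if and only if $N(z)\in A$. Hence $z$ lies in $A\Delta A_\eta$ precisely when $\mathbf{1}_A(z)\neq\mathbf{1}_A(N(z))$. Taking expectation, applying Fubini, substituting $h=y-z$ and using the identity $\Vol(A\Delta(A+h))=2(\Vol(A)-g_A(h))$, I arrive at the core representation
\begin{equation*}
\E\Vol(A\Delta A_\eta)=2\int_{\R^d}\lambda e^{-\lambda\kappa_d\|h\|^d}\bigl(\Vol(A)-g_A(h)\bigr)\,dh.
\end{equation*}

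The second step is to introduce the natural scale $\lambda^{-1/d}$ via $h=\lambda^{-1/d}ru$ with $r\ge0$, $u\in\Sd$; after absorbing the Jacobian and factoring out $\lambda^{-1/d}$ this becomes
\begin{equation*}
\lambda^{1/d}\E\Vol(A\Delta A_\eta)=2\int_0^\infty r^d e^{-\kappa_d r^d}\int_{\Sd}\frac{\Vol(A)-g_A(\lambda^{-1/d}ru)}{\lambda^{-1/d}r}\,du\,dr.
\end{equation*}
Galerne's theorem, applicable because $\Per(A)<\infty$, gives that the inner spherical integral tends to $\kappa_{d-1}\Per(A)$ as its argument shrinks to zero. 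Evaluating the remaining one-dimensional integral $\int_0^\infty r^d e^{-\kappa_d r^d}\,dr=\Gamma(1/d)/(d^2\kappa_d^{1+1/d})$ by the substitution $s=\kappa_d r^d$ then reproduces exactly the constant $c_d$ and the asymptotic~\eqref{2340}.

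The main technical obstacle is justifying the exchange of the limit $\lambda\to\infty$ with the outer integration, for which a $\lambda$-independent integrable majorant of the rescaled difference quotient is needed. I would look to derive such a majorant from a global linear estimate of the form $\Vol(A)-g_A(h)\le C\,\Per(A)\,\|h\|$, available from Galerne's covariogram--perimeter machinery, together with the trivial bound $\Vol(A)-g_A(h)\le\Vol(A)$; the super-exponential factor $e^{-\kappa_d r^d}$ then guarantees integrability in $r$. Because Galerne's directional convergence is in general only an averaged statement on $\Sd$, it is important that the spherical integration over $u$ is retained throughout rather than attempting a pointwise directional limit.
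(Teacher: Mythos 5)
Your proposal is correct and follows essentially the same route as the paper: the same covariogram representation of $\E\Vol(A\Delta A_\eta)$ (the paper derives it via the Slivnyak--Mecke formula rather than the nearest-neighbour density, but the resulting identity is the same), the same rescaling $h=\lambda^{-1/d}ru$, and the same use of Galerne's Lipschitz bound $\mathrm{Lip}(g_A)\le\tfrac12\Per(A)$ as the integrable majorant together with the spherically averaged directional-derivative identity to evaluate the limit. The constant also checks out, since $\Gamma(1+1/d)=d^{-1}\Gamma(1/d)$.
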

The probabilistic intuition behind this asymptotic is the
following. The set difference $A \Delta A_\eta$ behaves
asymptotically as a very small tube neighbourhood  of the boundary
$\partial A$ formed out of the Poisson--Voronoi cells with nuclei
lying  almost on $\partial A$. Since the volume of a typical
Poisson--Voronoi cell is $\lambda^{-1}$, its diameter has the
order $\lambda^{-1/d}$, and so the volume of this tube
neighborhood has the order $\Per(A)\lambda^{-1/d}$.
\medskip

In the following, saying that some inequality holds asymptotically as $\lambda\to\infty$, we mean that it holds for
sufficiently large $\lambda\geqslant\lambda_0$. The choice of $\lambda_0$ might depend on $A$. Thus, all estimates are not uniform with
respect to $A$ (including those of Theorem~\ref{thm:MeanVol}).

\begin{theorem}\label{thm:asympt_volApprox}
If $A \subset \R^d $ is a Borel set with $\Vol(A)<\infty$, then
$$
\Big|\E \Vol^n(A_\eta)-\Vol^n(A)\Big|\leqslant C_{n,d}\cdot\Vol^{n-1}(A)\cdot\lambda^{-1},\quad\lambda\to\infty,
$$
where  $C_{n,d}$ is some constant  independent of $\lambda$ and $A$.
\end{theorem}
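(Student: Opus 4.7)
The plan is to compute $\E\Vol^n(A_\eta)$ directly via the multivariate Slivnyak--Mecke formula, group the resulting $n$-fold sum over points of $\eta\cap A$ by the coincidence pattern of the nuclei, and isolate the leading correction to $\Vol^n(A)$.

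Writing $\Vol(A_\eta) = \sum_{x\in\eta\cap A}\Vol(\upsilon_\eta(x))$ and expanding $\Vol^n(A_\eta)$ as an $n$-fold sum, we group the tuple $(x_1,\ldots,x_n)$ by the partition $\pi$ of $[n]$ with $x_i=x_j$ iff $i,j$ lie in the same block. Setting $m:=|\pi|$, the Slivnyak--Mecke formula yields
$$\E\Vol^n(A_\eta) = \sum_{\pi\in\mathcal{P}([n])}\lambda^{m}\int_{A^{m}}\Phi_\pi(y_1,\ldots,y_{m})\,dy_1\cdots dy_{m}$$
with $\Phi_\pi(y_1,\ldots,y_m):=\E\prod_{B\in\pi}\Vol(\upsilon_{\eta\cup\{y_1,\ldots,y_m\}}(y_B))^{|B|}$. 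By homogeneity of $\eta$, $\Phi_\pi$ factorises as $\prod_{B\in\pi}c_{|B|}(d)\lambda^{-|B|}$, where $c_{|B|}(d):=\lambda^{|B|}\E\Vol(\upsilon_{\eta\cup\{0\}}(0))^{|B|}$ and $c_1=1$, as the distinct nuclei $y_j$ pairwise separate. The rescaling $t_j=\lambda^{1/d}(y_j-y_1)$ identifies the deviation $\Phi_\pi-\prod c_{|B|}/\lambda^n$ with $\lambda^{-n}$ times a fixed dimensionless function of $(t_2,\ldots,t_m)\in\R^{d(m-1)}$; assuming this function is integrable, we obtain
$$\lambda^{m}\int_{A^{m}}\Phi_\pi\,dy = \Vol^{m}(A)\prod_{B\in\pi}c_{|B|}(d)\cdot\lambda^{m-n} + O\bigl(\Vol(A)\lambda^{1-n}\bigr).$$

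The singleton partition ($m=n$, $c_1=1$) reproduces $\Vol^n(A)$ exactly; the $\binom{n}{2}$ partitions with $m=n-1$ contribute $\binom{n}{2}c_2(d)\Vol^{n-1}(A)/\lambda$; and each partition with $m\le n-2$ contributes $O(\Vol^m(A)/\lambda^{n-m}) = O\bigl((\lambda\Vol(A))^{-(n-1-m)}\Vol^{n-1}(A)/\lambda\bigr)$, which is dominated by $\Vol^{n-1}(A)/\lambda$ as soon as $\lambda\Vol(A)\ge 1$. This last condition is precisely why the bound holds only asymptotically in $\lambda$, with $\lambda_0$ depending on $A$ through $\Vol(A)$. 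Collecting everything gives $|\E\Vol^n(A_\eta)-\Vol^n(A)|\le C_{n,d}\Vol^{n-1}(A)/\lambda$, where $C_{n,d}$ absorbs the finite constants produced by the (finitely many) partitions. The principal technical obstacle is the uniform integrability, in the rescaled coordinates, of the multi-point Voronoi-cell correlation excess; this follows from the exponential void probability $\P(\eta\cap K=\emptyset)=e^{-\lambda\Vol(K)}$ of a homogeneous Poisson process combined with the super-exponential tail of the Voronoi-cell stabilisation radius.
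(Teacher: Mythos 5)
Your combinatorial skeleton is exactly the paper's: expand $\Vol^n(A_\eta)$ as an $n$-fold sum over nuclei, group by the coincidence pattern (your partitions $\pi$ correspond to the paper's indices $i,m_1,\dots,m_i$), apply the multivariate Slivnyak--Mecke formula, check that patterns with $m\le n-1$ distinct nuclei contribute $O(\Vol^m(A)\lambda^{m-n})$, and identify the all-distinct term with $\Vol^n(A)$ up to a correlation correction of order $\Vol^{n-1}(A)\lambda^{-1}$. The accounting of orders, including the observation that the bound is only asymptotic because one needs $\lambda\Vol(A)\ge 1$ to absorb the lower-order partitions, matches the paper.

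However, the step you flag as ``the principal technical obstacle'' --- that the correlation excess $\Phi_\pi-\prod_B c_{|B|}\lambda^{-|B|}$ is dominated, after rescaling, by an integrable function of the separations --- is not a footnote: it is the entire content of the paper's Lemmas on joint void probabilities, and as you have set things up you need it \emph{two-sidedly} (both that joint cell volumes do not exceed the product of marginals by too much, and that the shrinkage caused by inserting the other nuclei $y_j$ does not reduce it by too much), since you want to bound $|\E\Vol^n(A_\eta)-\Vol^n(A)|$. The paper sidesteps the lower bound entirely: by Lyapunov/Jensen, $\E\Vol^n(A_\eta)\ge(\E\Vol(A_\eta))^n=\Vol^n(A)$, so only a one-sided \emph{upper} estimate is needed. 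That upper estimate is then obtained without any stabilisation-radius argument: the event $\{x_r\in\upsilon(y_r),\ r=1,\dots,n\}$ is dominated by the event that the balls $B^{x_r}_{\|x_r-y_r\|}$ are void of $\eta$; if the balls are disjoint the probability factorises exactly and integrates to precisely $\Vol^n(A)$, and if two balls overlap, a H\"older inequality for void probabilities plus the triangle inequality produces an extra factor $\exp(-c\lambda(\|x_s-y_t\|^d+\|x_t-y_s\|^d))$ whose integral yields the $\Vol^{n-1}(A)\lambda^{-1}$ correction. I would recommend adopting this one-sided route: it turns your ``assuming this function is integrable'' into a short explicit computation, whereas making your two-sided stabilisation claim rigorous (in particular controlling $\E\prod_B\Vol(\upsilon_{\eta\cup\bar y}(y_B))^{|B|}$ from \emph{below} by the product minus an integrable error) is genuinely more work and is not carried out in your plan. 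A related small inaccuracy: the singleton partition does not reproduce $\Vol^n(A)$ ``exactly''; it is $\Vol^n(A)$ plus the very correlation correction under discussion, which is of the same order $\Vol^{n-1}(A)\lambda^{-1}$ as the pair-partition terms, so your proposed identification of the second-order coefficient $\binom{n}{2}c_2(d)$ is also incomplete.
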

\begin{remark}
In fact, we show that the following non--asymptotic inequality holds: for any $\lambda>0$
$$
\Big|\E \Vol^n(A_\eta)-\Vol^n(A)\Big|\leqslant C_{n,d}\cdot\sum_{k=1}^{n-1}\Vol^{n-k}(A)\cdot\lambda^{-k}.
$$
\end{remark}
\begin{theorem}\label{0037}
If $A \subset \R^d $ is a Borel set with $\Vol(A)<\infty$ and $\Per(A)<\infty$, then
$$
\Big|\E\Vol^n(A\Delta A_\eta)-(\mathbb{E} \Vol(A\Delta A_\eta))^n\Big|\leqslant C^\prime_{n,d} \cdot \Per (A)^{n-1} \cdot\lambda^{-1-(n-1)/d},\quad\lambda\to\infty,
$$
where  $C^\prime_{n,d}$ is some constant  independent of $\lambda$ and $A$.
\end{theorem}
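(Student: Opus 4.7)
My plan is to rewrite the difference via Fubini as an integral over $(\R^d)^n$:
\[
\int_{(\R^d)^n}\Big[p_n(x_1,\ldots,x_n)-\prod_{i=1}^n p(x_i)\Big]\,dx_1\cdots dx_n,
\]
with $p(x):=\P(x\in A\Delta A_\eta)$ and $p_n(x_1,\ldots,x_n):=\P\bigl(\bigcap_{i=1}^n\{x_i\in A\Delta A_\eta\}\bigr)$, and then estimate the integrand using the Poisson-Voronoi structure. I would apply the Slivnyak--Mecke formula, summing over the nearest point $y\in\eta$ of each $x_i$, to obtain
\[
p(x)=\l\int_{\R^d}g(x,y)\,e^{-\l\k_d|y-x|^d}\,dy,\qquad g(x,y):=\I_A(x)\I_{A^c}(y)+\I_{A^c}(x)\I_A(y),
\]
so that $\prod_i p(x_i)=\l^n\int\prod_i g(x_i,y_i)\,e^{-\l\sum_i\Vol(B_i)}\,dy$ with $B_i:=B(x_i,|y_i-x_i|)$. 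The $n$-fold Mecke formula gives the analogous representation of $p_n$ in which the exponent is $\l\Vol(\bigcup_i B_i)$ instead, plus a remainder from "crossing" configurations (a point $y_j$ lying in another $B_i$) and from coincidences of nearest neighbours. The latter contributions concentrate on $|x_i-x_j|\le C\l^{-1/d}$ for some pair, and I would control them with the same covariogram tools.

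The key observation is that the difference between the two representations comes entirely from the gap between $\sum_i\Vol(B_i)$ and $\Vol(\bigcup_i B_i)$. Bonferroni gives $\sum_i\Vol(B_i)-\Vol(\bigcup_i B_i)\le \sum_{i<j}\Vol(B_i\cap B_j)$, and combining with $1-e^{-u}\le u$ together with $\Vol(\bigcup_i B_i)\ge n^{-1}\sum_i\Vol(B_i)$ I would obtain
\[
0\le e^{-\l\Vol(\bigcup_i B_i)}-\prod_i e^{-\l\Vol(B_i)}\le \l\sum_{i<j}\Vol(B_i\cap B_j)\prod_k e^{-\l\k_d|y_k-x_k|^d/n}.
\]
Inserting this into the difference of representations decomposes the quantity to bound into $\binom{n}{2}$ pair contributions, in each of which the $n-2$ indices $k\notin\{i,j\}$ factor as independent integrals
\[
\l\int g(x_k,y_k)\,e^{-\l\k_d|y_k-x_k|^d/n}\,dx_k\,dy_k=2\l\int[\Vol(A)-\gamma_A(z)]\,e^{-\l\k_d|z|^d/n}\,dz,
\]
each of order $\Per(A)\,\l^{-1/d}$ by the Galerne covariogram asymptotics (precisely the input behind Theorem~\ref{thm:MeanVol}).

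For the surviving pair integral, I would bound $\Vol(B_i\cap B_j)\le \k_d\min(|y_i-x_i|,|y_j-x_j|)^d\cdot\I_{\{|x_i-x_j|\le|y_i-x_i|+|y_j-x_j|\}}$, change variables $u_i=y_i-x_i$, $u_j=y_j-x_j$, $v=x_j-x_i$, estimate the inner $x_i$-integral by $\Vol(A)-\gamma_A(u_i)$, and rescale $u_i,u_j,v\mapsto\l^{-1/d}u_i,\l^{-1/d}u_j,\l^{-1/d}v$ to absorb the $\l$-dependence into explicit powers. The Galerne bound $\Vol(A)-\gamma_A(z)\le C(d)\Per(A)|z|$ should then yield a pair contribution of order $\Per(A)\l^{-1-1/d}$. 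Multiplying by the $n-2$ single factors of order $\Per(A)\l^{-1/d}$ produces the claimed bound $\Per(A)^{n-1}\l^{-1-(n-1)/d}$. The hardest part will be (i) rigorously showing that the crossing and coincidence remainders in the Mecke representation of $p_n$ are of the same or lower order, and (ii) extracting exactly one power of $\Per(A)$ (rather than $\Vol(A)$) from the $x_i$-integration in the pair term; both rely crucially on the covariogram machinery developed for Theorem~\ref{thm:MeanVol}.
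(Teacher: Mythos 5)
Your overall architecture is the same as the paper's: extract a ``product'' main term whose integral is exactly $(\E \Vol(A\Delta A_\eta))^n$ via the identity \eqref{1411}, show that the two sources of correction --- several $x_i$ sharing a nucleus, and intersecting exclusion balls --- are each $O(\Per(A)^{n-1}\lambda^{-1-(n-1)/d})$, and obtain the reverse inequality from Lyapunov. Where you genuinely diverge is in how the intersecting-balls correction is extracted. The paper never uses the exact void probability of the union $\bigcup_i B_i$; it bounds $\P(B_r\cap\eta=\emptyset,\ r=1,\dots,n)$ by a H\"older-type inequality (Lemmas \ref{2301}, \ref{1654}, \ref{1952}), which needs a small geometric lemma relating the ``crossed'' distances $\|x_s-y_t\|$, $\|x_t-y_s\|$ to the radii when two balls meet. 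Your route --- exact exponent $e^{-\lambda\Vol(\bigcup_iB_i)}$, Bonferroni, $1-e^{-u}\leqslant u$, and $\Vol(\bigcup_iB_i)\geqslant n^{-1}\sum_i\Vol(B_i)$ --- is more transparent and yields the explicit factor $\Vol(B_i\cap B_j)$, which your change of variables plus the Lipschitz covariogram bound \eqref{0416} then converts into one power of $\Per(A)$ and the extra $\lambda^{-1}$; the exponents do come out to $\Per(A)^{n-1}\lambda^{-1-(n-1)/d}$ as you claim. Your direct use of \eqref{0416} for the single factors is also a little cleaner than the paper's derivation of \eqref{0629}, which routes through Theorem~\ref{thm:MeanVol}.

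Two places where the plan is thin, though both are reparable. First, the coincidence terms: your heuristic that they ``concentrate on $\|x_i-x_j\|\leqslant C\lambda^{-1/d}$'' is not literally correct (a Voronoi cell containing two of the $x_i$ need not be small), and this is where most of the work in the paper's proof actually sits. The clean device is H\"older with exponent depending on the number $m$ of distinct nuclei (Lemmas \ref{2301} and \ref{1534} combined with \eqref{0629}): a configuration with $m$ nuclei contributes $O(\Per(A)^{m}\lambda^{-n+m-m/d})$, and the worst case $m=n-1$ is exactly the claimed order (for $d=1$ all these terms have order $\lambda^{-n}$, which still matches). Second, dropping the constraints $y_j\notin B_i$ in the Mecke representation of $p_n$ only gives an upper bound on $p_n-\prod_i p(x_i)$; for the stated absolute-value bound you must also invoke $\E\Vol^n(A\Delta A_\eta)\geqslant(\E\Vol(A\Delta A_\eta))^n$, exactly as the paper does.
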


\begin{remark}
We conjecture that the following limit theorems can be proven by
the method of moments (see e.g. {\rm \cite[Theorems 30.1,
30.2]{Bill79}}):
\begin{equation} \label{eq:CLT}
 \lambda^{1/2(1+1/d)}\left( \Vol(A_\eta) - \Vol(A) \right) \to
N(0,\sigma_1 \Per(A)),
\end{equation}
$$
\lambda^{1/2(1+1/d)}\left( \Vol(A \Delta A_\eta) - c_d \Per(A)
\lambda^{-1/d} \right) \to N(0,\sigma_2 \Per(A) )
$$
in distribution as $\lambda\to\infty,$  $\sigma_1,  \sigma_2>0$.
\end{remark}
Recently (\ref{eq:CLT})  was proved by Schulte~\cite{Schulte2012} for {\it convex} sets $A$ using a central limit theorem for Wiener-It\'o chaos expansions. In his Remark~4 he points out that the result can be extended to all sets where the volume of a small tube neighbourhood $B(\partial A)$ of  $\partial A$ can be bounded in a nice way. Yet the general conjecture seems to be open.

\begin{corollary}\label{cor:asympt_moments}
If $A \subset \R^d $ is a Borel set with $\Vol(A)<\infty$ and $\Per(A)<\infty$, then 
$$
\E \Vol^n(A\Delta A_\eta)=\left( \mathbb{E} \Vol(A\Delta
A_\eta)\right)^n(1+O(\lambda^{-1+1/d})),\quad\lambda\to\infty,
$$
and for $d\geqslant2$
$$
\E \Vol^n(A\Delta A_\eta)=(c_d\Per(A))^n\lambda^{-n/d}(1+o(1)),\quad\lambda\to\infty.
$$
\end{corollary}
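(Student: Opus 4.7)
The corollary is essentially an algebraic consequence of Theorem~\ref{thm:MeanVol} and Theorem~\ref{0037}, so my plan is to combine the two asymptotics directly rather than invoke any new probabilistic input.

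For the first statement, I would start from the non-asymptotic bound of Theorem~\ref{0037}:
\[
\bigl|\E\Vol^n(A\Delta A_\eta)-(\E\Vol(A\Delta A_\eta))^n\bigr|\le C'_{n,d}\Per(A)^{n-1}\lambda^{-1-(n-1)/d},
\]
and divide both sides by $(\E\Vol(A\Delta A_\eta))^n$. By Theorem~\ref{thm:MeanVol},
\[
(\E\Vol(A\Delta A_\eta))^n=(c_d\Per(A))^n\lambda^{-n/d}(1+o(1)),
\]
so the quotient becomes
\[
\frac{C'_{n,d}}{c_d^{\,n}\,\Per(A)}\;\lambda^{-1-(n-1)/d+n/d}(1+o(1))=O\!\left(\lambda^{-1+1/d}\right).
\]
The arithmetic of the exponent, $-1-(n-1)/d+n/d=-1+1/d$, is what produces the advertised error order. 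This yields the first identity of the corollary for every $d\ge 1$.

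For the second statement, note that when $d\ge 2$ we have $-1+1/d<0$, so $O(\lambda^{-1+1/d})=o(1)$. I would then simply substitute the asymptotic of $\E\Vol(A\Delta A_\eta)$ from Theorem~\ref{thm:MeanVol} into the first identity:
\[
\E\Vol^n(A\Delta A_\eta)=(c_d\Per(A))^n\lambda^{-n/d}(1+o(1))^{n}(1+o(1)),
\]
which collapses to $(c_d\Per(A))^n\lambda^{-n/d}(1+o(1))$.

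There is no real obstacle here; the only subtle point is observing that the second formulation requires $d\ge 2$ (exactly because the remainder $\lambda^{-1+1/d}$ is bounded but not vanishing in dimension one), whereas the first formulation remains valid in every dimension as an order-of-magnitude statement. I would therefore make this dimension-dependence explicit at the transition between the two displays in the write-up.
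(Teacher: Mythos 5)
Your proposal is correct and follows exactly the route the paper intends: the corollary is stated as an immediate consequence of Theorems~\ref{thm:MeanVol} and~\ref{0037}, obtained by dividing the error bound of Theorem~\ref{0037} by $(\E\Vol(A\Delta A_\eta))^n$ and using the exponent identity $-1-(n-1)/d+n/d=-1+1/d$. Your observation about why $d\geqslant2$ is needed for the second display is also the right one.
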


The asymptotic order of the variance of $A_\eta$ and $A \Delta
A_\eta$ as $\lambda\to\infty$ was first studied in \cite{HR09} for
convex sets $A$. We extend that results to arbitrary Borel sets.
\begin{corollary} \label{cor:var}
If $A \subset \R^d $ is a Borel set with $\Vol(A)<\infty$ and $\Per(A)<\infty$, then
$$
\var \Vol(A_\eta)\leqslant C_d\cdot  \Per(A)\cdot  \lambda^{-1- 1/d},\quad\lambda\to\infty,
$$
and
$$
\var \Vol(A \Delta A_\eta)\leqslant C_d\cdot  \Per(A)\cdot  \lambda^{-1- 1/d},\quad\lambda\to\infty,
$$
where  $C_{d}$ is some constant  independent of $\lambda$ and $A$.
\end{corollary}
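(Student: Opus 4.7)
The bound on $\var \Vol(A \Delta A_\eta)$ is immediate from Theorem~\ref{0037} applied with $n=2$, since
$$\var \Vol(A \Delta A_\eta) = \E \Vol^2(A \Delta A_\eta) - \bigl(\E \Vol(A \Delta A_\eta)\bigr)^2$$
is precisely the quantity bounded there by $C'_{2,d} \Per(A) \lambda^{-1-1/d}$. The first bound, on $\var \Vol(A_\eta)$, does not follow from Theorems~\ref{thm:asympt_volApprox} or~\ref{0037} in a black-box way (Theorem~\ref{thm:asympt_volApprox} only gives $O(\Vol(A)\lambda^{-1})$, which is of the wrong order), so a separate second-moment computation is required.

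The plan is to start from the representation
$$\Vol(A_\eta) - \Vol(A) = \int_{\R^d} \bigl(\phi(z) - \mathbf{1}\{z \in A\}\bigr) \, dz, \qquad \phi(z) := \mathbf{1}\{N(z, \eta) \in A\},$$
where $N(z, \eta)$ denotes the nucleus of the Voronoi cell of $\eta$ that contains $z$. Since $\E \Vol(A_\eta) = \Vol(A)$, the deterministic term drops out of the variance and Fubini gives
$$\var \Vol(A_\eta) = \int_{\R^d} \int_{\R^d} \cov(\phi(z_1), \phi(z_2)) \, dz_1 \, dz_2,$$
so the task reduces to bounding this double integral of a covariance kernel.

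I would control the kernel by two complementary estimates. First, $\phi(z_i)$ is measurable with respect to $\eta \cap B(z_i, \|z_i - N(z_i, \eta)\|)$, so on the event that these two balls are disjoint the $\phi(z_i)$ are independent; applying the Poisson tail $\P(\|z - N(z, \eta)\| > r) = \exp(-\lambda \kappa_d r^d)$ with $r = \|z_1 - z_2\|/2$ yields $|\cov(\phi(z_1), \phi(z_2))| \leq C \exp(-c \lambda \|z_1 - z_2\|^d)$. Second, Cauchy--Schwarz combined with $|\phi| \leq 1$ gives $|\cov(\phi(z_1), \phi(z_2))| \leq \sqrt{p_1(1-p_1) p_2(1-p_2)}$ with $p_i = \E \phi(z_i)$, and the same Poisson tail shows that $p_i$ is within $\exp(-c \lambda \dist(z_i, \partial A)^d)$ of $\mathbf{1}\{z_i \in A\}$; hence the kernel is also exponentially small unless both $z_1$ and $z_2$ lie in a tube of width of order $\lambda^{-1/d}$ about $\partial A$. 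Combining the two estimates, the integral is dominated by the product $\Vol\{z \colon \dist(z, \partial A) \leq r\} \cdot \Vol B(0, r)$ for $r$ a suitable multiple of $\lambda^{-1/d}$, which by the tube estimate $\Vol\{\dist(\cdot, \partial A) \leq r\} \leq C_d \Per(A)\, r$ (drawn from the covariogram--perimeter machinery of Section~\ref{sec:tools}) is of order $\Per(A) \cdot r \cdot r^d = \Per(A) \lambda^{-1-1/d}$, as required.

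The hard part will be to make the tube estimate and the balance between the two decay regimes quantitatively sharp for an arbitrary Borel set of finite perimeter, where $\partial A$ is not assumed to be a smooth manifold. However, both ingredients are in spirit already present in the proofs of Theorems~\ref{thm:MeanVol} and~\ref{0037}, so the argument above should amount to a mild variant of the second-moment analysis carried out there, rather than a genuinely new computation.
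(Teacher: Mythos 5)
Your handling of the second inequality is exactly the paper's: it is the statement of Theorem~\ref{0037} with $n=2$. Your covariance-kernel setup for the first inequality is also sound as far as it goes: the representation $\var\Vol(A_\eta)=\int\int\cov(\phi(z_1),\phi(z_2))\,dz_1\,dz_2$ and the decorrelation bound $|\cov(\phi(z_1),\phi(z_2))|\leqslant C e^{-c\lambda\|z_1-z_2\|^d}$ via disjoint stopping balls are both correct. The gap is in how you propose to close the estimate. The ``tube estimate'' $\Vol\{z:\dist(z,\partial A)\leqslant r\}\leqslant C_d\Per(A)\,r$ is \emph{not} part of the covariogram machinery of Section~\ref{sec:tools}, and it is false for general Borel sets of finite perimeter: the topological boundary $\partial A$ can have positive Lebesgue measure while $\Per(A)$ stays bounded (modify a ball on a dense null set), and even for the essential boundary a uniform-in-$r$ bound of this form does not follow from finite perimeter (take $A$ a union of $N$ balls of radius $\rho$: the $r$-tube has volume of order $Nr^d$, which exceeds $C\Per(A)r\sim N\rho^{d-1}r$ once $r\gg\rho$; since the estimate must hold at the scale $r\sim\lambda^{-1/d}$ for \emph{all} large $\lambda$ with a constant independent of $A$, no rescaling saves it). What inequality \eqref{0416} actually gives is $\Vol((A+x)\setminus A)\leqslant\frac12\Per(A)\|x\|$, a translated-set (covariogram) statement, which is precisely what the paper uses \emph{instead of} a tube bound; replacing your tube step by covariogram bounds is possible but requires rewriting $p(z)(1-p(z))$ via the Mecke formula and estimate \eqref{0629}, which is no longer the ``mild variant'' you describe but essentially a rerun of the $n=2$ computation in the proof of Theorem~\ref{0037}.

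You also overlooked the shortcut the paper takes, which makes the separate second-moment computation unnecessary. Since $\E\Vol(A\setminus A_\eta)=\E\Vol(A_\eta\setminus A)$, identity \eqref{1543} gives $\var\Vol(A_\eta)=\E\left(\Vol(A\setminus A_\eta)-\Vol(A_\eta\setminus A)\right)^2$, and expanding this square produces exactly the quantities $u_0,u_1,u_2$ already bounded in \eqref{1249} during the proof of Theorem~\ref{0037} (with $n=2$); combining those bounds with Lyapunov's inequality yields $\var\Vol(A_\eta)\leqslant C\Per(A)\lambda^{-1-1/d}$ directly. So while you are right that the result is not a black-box consequence of the \emph{statements} of Theorems~\ref{thm:asympt_volApprox} and~\ref{0037}, it does follow from the intermediate estimates of the latter's proof, and your alternative route as written does not go through for the class of sets considered.
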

The second inequality  follows immediately from  Theorem~\ref{0037}. The first inequality will be proved in Section~\ref{sect:Moments}.
\medskip

The probabilistic heuristic explaining the asymptotic behavior of
the variances is the following. Since $A \Delta A_\eta$ is
asymptotically a very small tube neighbourhood $B(\partial A)$ of
 $\partial A$ consisting of parts  $\tilde \upsilon_\eta(x)$ of  almost independent Poisson--Voronoi cells $\upsilon_\eta(x)$
with nuclei $x\in B(\partial A)$  we may use the formula for the
variance of the compound Poisson distribution:
$$
\var \Vol(A \Delta A_\eta) = \var \left( \sum_{x\in \eta \cap B(\partial A)}
\Vol(\tilde \upsilon_\eta(x)) \right)\approx  \var \left( \sum_{i=1}^N Y_i
 \right)
$$
where random variables $Y_i\eqd \Vol(\tilde \upsilon_\eta(x))$ are {\it
i.i.d.} and $$N\eqd {\rm card} (\eta \cap B(\partial A) )\sim
\mbox{Pois}(\l \Vol(B(\partial A)))$$ is independent of $Y_i$. Here
$\eqd$ means the equality in distribution and ${\rm card} (B)$ is
the cardinality of a set $B$. Then
\begin{multline*}
\var \left( \sum_{i=1}^N Y_i
 \right) =\E  N \,  \var Y_1 + \var N \, (\E Y_1)^2=\l
\Vol(B(\partial A))\, \E Y_1^2\\
  \le \lambda \Vol (B(\partial A)) \left( \E \Vol (\upsilon_\eta(x)) \right)^2  =O\left(\l \Per(A) \l^{-1/d}
\l^{-2}\right)=\Per(A) \, O\left(\l^{-1-1/d}\right)
\end{multline*} since $\tilde \upsilon_\eta(x)\subset \upsilon_\eta(x)$ for any $x$,  the
second moment of the volume of a typical Poisson--Voronoi cell is
of order $\lambda^{-2}$ and the volume of $B(\partial A)$ is of
order $\Per(A)\lambda^{-1/d}$.

\bigskip
The results of Corollary $\ref{cor:var}$ can also  be obtained by using
the  Poincar\'e inequality which gives
an upper bound on the variance of a functional of a
Poisson point process. Let $\cal N$ be the set of all locally finite configurations on $\R^d$. Consider a nonnegative measurable function $F\,:\, \cal N\to\R$. If $\E F^2(\eta) < \infty$, then
\begin{equation}\label{eq:EfronStein}
\var F(\eta )  \leqslant  \l \E  \int\limits_{\Rd} (  F(\eta \cup \{y\}
)- F(\eta))^2 \, dy,
\end{equation}
where we added a point $y$ to the Poisson point process $\eta$. Putting $F(\eta)=\Vol(A_\eta)$ in \eqref{eq:EfronStein}, we get
$$
\var \Vol(A_\eta) \leqslant \l \int\limits_{\Rd}  \E \left(
\Vol(A_{\eta\cup \{y\} }) - \Vol(A_\eta) \right)^2 \, dy,
$$
where the right--hand side can be estimated from above to get the
upper bound in Corollary $\ref{cor:var}$. The reasoning for the symmetric difference $A\Delta A_\eta$ is similar.

In full generality, inequality  \eqref{eq:EfronStein} was proved by Wu {\rm \cite{Wu}.}
As was shown by Last and Penrose  {\rm \cite[Theorem 1.2]{LastPenrose2010}}, it
is a consequence of an even more general inequality following from
the Fock space representation of Poisson point processes.

\section{Preliminaries}\label{sec:tools}
For basic facts from
integral geometry, stochastic geometry and Voronoi tessellations
which are not explained in the following, we refer the reader to
\cite{SchnWe3}, \cite{SKM95}, and \cite{Mol}.

Define the perimeter of a Borel set $A$ as
$$
\Per(A)=\sup\Big\{\int\limits_{A}\Div\varphi(x)\, dx\,:\,\varphi\in\mathcal{C}_c^1(\mathbb{R}^d),\|\varphi\|_\infty\leqslant1\Big\},
$$
cf. \cite{AF00}, where
$$
\Div\varphi(x)=\sum_{i=1}^d\frac{\partial\varphi_i}{\partial
x_i}\quad\mbox{and}\quad\|\varphi\|_\infty=\max_{i=1,\dots,d}\sup_{x\in\mathbb{R}^d}|\varphi_i(x)|
$$
for $\varphi=(\varphi_1,\dots,\varphi_d)$. The class
$\mathcal{C}_c^1(\mathbb{R}^d)$ consists of all continuously
differentiable vector--valued functions from $\mathbb{R}^d$ to
$\mathbb{R}^d$ with compact support.

Let $A$ be a Borel set with finite volume. Then
$$
g_A(x)=\Vol((A+x)\cap A),\quad x\in\R^d,
$$
is a covariogram of $A$. For the history on the covariogram
problem see the references in \cite{Gal11} and also the recent
breakthrough by Averkov and Bianchi \cite{AvBi09}.

In the proof of Theorem~\ref{thm:MeanVol} we use the result obtained by Galerne in~\cite[Theorem~14]{Gal11}. The following assertions are equivalent:
\begin{itemize}
\item[(a)] $\Per(A)<\infty$;
\item[(b)] there exists a finite limit
\begin{equation}\label{eq:1}
\lim_{r\to+0}\frac{g_A(ru)-g_A(0)}{r}  =  \frac{\partial g_A}{\partial u}(0)
\end{equation}
for all $u\in\mathbb{S}^{d-1}$;
\item[(c)] $g_A$ is Lipschitz.
\end{itemize}
In addition,  the Lipschitz constant of $g_A$ satisfies
\begin{equation}\label{0416}
\mbox{\rm Lip}(g_A)\leqslant\frac12\Per(A)
\end{equation}
and it holds
\begin{equation}\label{eq:2}
\int\limits_{\mathbb{S}^{d-1}}\frac{\partial g_A}{\partial u}(0) \, \cH_{d-1} (du) = - \kappa_{d-1} \Per(A).
\end{equation}

\bigskip

Another tool we need is the refined Campbell--Mecke formula for stationary point processes (cf. e.g. \cite{SKM95}). Using Slivnyak's theorem, we give its particular case for the Poisson point process.

As above, let $\eta$ be a homogeneous Poisson point process of intensity $\l >0$, and $\cal N$ be the set
of all locally finite point configurations on $\R^d$. Consider a nonnegative measurable function $f\,:\, {\cal N}\times(\R^d)^m\to\R$. Then
\begin{equation}\label{2352}
\E\sum_{(y_1,\dots,y_m)\in\eta^m_{\ne}}F(\eta,y_1,\dots,y_m)=
\lambda^m\int\limits_{(\R^d)^m}\E F(\eta\cup{\bar y_{m}},y_1,\dots,y_m)\,dy_1\dots dy_m,
\end{equation}
where $\eta^m_{\ne}$ denotes the set of all $m$--tuples of pair--wise distinct points from $\eta$,
and $\eta\cup{\bar y_{m}}$ is the process $\eta$ with added point set $\bar y_{m} =\{ y_1,\dots, y_m \}$.

As a simple corollary we get two identities which are crucial for us in the sequel.
\begin{proposition}
If $A \subset \R^d $ is a Borel set with $\Vol(A)<\infty$, then
\begin{equation}\label{1410}
\mathbb{E}\Vol(A_\eta)=\lambda\int\limits_{\mathbb{R}^d}\int\limits_{A}e^{-\lambda\kappa_d\|y-x\|^d}\,dy\,dx=\Vol(A),
\end{equation}
\begin{equation}\label{1411}
\mathbb{E}\Vol(A \Delta A_\eta)=
2\lambda \int\limits_{\mathbb{R}^d\setminus A} \int\limits_{A} e^{-\lambda\kappa_d\|y-x\|^d} \,dy \, dx.
\end{equation}
\end{proposition}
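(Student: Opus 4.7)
The plan is to express both $\Vol(A_\eta)$ and $\Vol(A \Delta A_\eta)$ as sums of volumes of Voronoi cells (or their intersections) indexed over points of $\eta$, and then apply the Campbell--Mecke formula \eqref{2352} with $m=1$, which, combined with Slivnyak's theorem (already embedded in \eqref{2352} through the configuration $\eta \cup \bar y_m$), reduces everything to the explicit void probabilities of balls under a Poisson process.

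For the first identity, I would start from the decomposition
$$
\Vol(A_\eta) = \sum_{x \in \eta \cap A} \Vol(\upsilon_\eta(x)),
$$
which holds since the Poisson--Voronoi cells tile $\R^d$ almost surely. Taking expectations and applying \eqref{2352} to $F(\eta, x) = \I_A(x)\,\Vol(\upsilon_{\eta}(x))$ yields
$$
\E \Vol(A_\eta) = \lambda \int_A \E \Vol\bigl(\upsilon_{\eta \cup \{x\}}(x)\bigr)\,dx.
$$
Now $\upsilon_{\eta \cup \{x\}}(x) = \{y \in \R^d : \eta \cap B(y, \|y-x\|) = \emptyset\}$, so by Fubini and the void probability formula for the Poisson point process,
$$
\E \Vol\bigl(\upsilon_{\eta \cup \{x\}}(x)\bigr) = \int_{\R^d} \P\bigl(\eta \cap B(y, \|y-x\|) = \emptyset\bigr)\,dy = \int_{\R^d} e^{-\lambda \kappa_d \|y-x\|^d}\,dy,
$$
which gives the integral representation in \eqref{1410}. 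The remaining evaluation is a one-line computation: the inner integral is translation invariant, and a polar change of variables with $u = \lambda \kappa_d r^d$ shows it equals $\lambda^{-1}$, hence $\E \Vol(A_\eta) = \Vol(A)$.

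For the second identity, I would split $A \Delta A_\eta = (A_\eta \setminus A) \sqcup (A \setminus A_\eta)$. Since the Voronoi cells tile $\R^d$ almost surely,
$$
\Vol(A_\eta \setminus A) = \sum_{x \in \eta \cap A} \Vol\bigl(\upsilon_\eta(x) \setminus A\bigr), \qquad \Vol(A \setminus A_\eta) = \sum_{x \in \eta \setminus A} \Vol\bigl(\upsilon_\eta(x) \cap A\bigr).
$$
Applying \eqref{2352} to each sum exactly as before, but now integrating over $y \in \R^d \setminus A$ (resp. $y \in A$) in the computation of the cell volume, gives
$$
\E \Vol(A_\eta \setminus A) = \lambda \int_A \int_{\R^d \setminus A} e^{-\lambda \kappa_d \|y-x\|^d}\,dy\,dx,
$$
and a symmetric expression for $\E \Vol(A \setminus A_\eta)$ with the roles of $A$ and $\R^d \setminus A$ swapped. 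Since the integrand is symmetric in $x$ and $y$, the two integrals coincide, and their sum yields \eqref{1411}.

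The only real subtlety lies in justifying the cell decompositions: the equality $\Vol(A_\eta) = \sum_{x \in \eta \cap A} \Vol(\upsilon_\eta(x))$ requires that the cells cover $\R^d$ with pairwise Lebesgue-null intersections, which holds almost surely for a homogeneous Poisson process (in particular $\eta \neq \emptyset$ a.s.), and that the summands are measurable functionals of $(\eta, x)$ so that \eqref{2352} applies. Once these measurability and a.s. coverage points are dispatched, everything else is Fubini and the explicit void probability.
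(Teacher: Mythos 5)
Your proof is correct and is essentially the paper's own argument: both reduce to the Slivnyak--Mecke formula \eqref{2352} with $m=1$, the void probability $\P\bigl(\eta\cap B(y,\|y-x\|)=\emptyset\bigr)=e^{-\lambda\kappa_d\|y-x\|^d}$, and the integral \eqref{0609}. The only cosmetic difference is that you first decompose the volume as a sum over nuclei and then apply Mecke, while the paper integrates the indicator $\I(x\in A_\eta)$ over $x$ first and applies Mecke to the resulting sum; after Fubini the two computations coincide.
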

\begin{proof}
By Fubini's theorem and the Slivnyak-Mecke formula \eqref{2352}, we have
\begin{multline*}
\E\Vol(A_\eta)=
\E \int\limits_{\R^d}\I\left(x\in A_\eta\right)\,dx=\int\limits_{\R^d}\E\sum_{y\in\eta\cap A}\I\left(x\in\upsilon_{\eta}(y)\right)\,dx
\\=\lambda\int\limits_{\R^d} \int\limits_{A}\P\left(x\in\upsilon_{\eta\cup\{y\}}(y)\right) \,dy \,dx 
=\lambda\int\limits_{\R^d} \int\limits_{A}e^{-\lambda\kappa_d\|x-y\|^d} \,dy \,dx .
\end{multline*}
Similarly, we obtain
$$
\E\Vol(A\setminus A_\eta)=\lambda\int\limits_{A} \int\limits_{\R^d\setminus A}e^{-\lambda\kappa_d\|x-y\|^d}\,dy \,dx
$$
and
$$
\E\Vol(A_\eta\setminus A)=\lambda\int\limits_{\R^d\setminus A} \int\limits_{A}e^{-\lambda\kappa_d\|x-y\|^d}\,dy  \,dx .
$$

By definition $\Vol(A\Delta A_\eta)=\Vol(A\setminus A_\eta)+\Vol(A_\eta\setminus A)$ which completes the proof of \eqref{1411}. To prove the second part of \eqref{1410}, one has to apply Fubini's theorem and then use the formula
\begin{equation}\label{0609}
\int\limits_{\R^d}e^{-c\|x-y\|^d}\,dx =\frac{\kappa_d}c,\quad c>0,
\end{equation}
which could be easily proved by introducing spherical coordinates.
\end{proof}

Notice that we have  also proved that
\begin{equation*} 
\E\Vol(A\setminus A_\eta)=\E\Vol(A_\eta\setminus A).
\end{equation*}
However, $\Vol(A\setminus A_\eta)$ and $\Vol(A_\eta\setminus A)$ are not equidistributed since the first random variable is bounded, and the second is not.
As a direct colollary of the identity~(\ref{1410}) we get
\begin{equation}\label{1543}
\var\Vol( A_\eta)=\E\left( \Vol(A\setminus A_\eta)-\Vol(A_\eta\setminus A)\right)^2,
\end{equation}
which we shall use in the following.

\section{Proofs}\label{sect:Proofs}

\subsection{Asymptotics of the mean volume of the symmetric
difference }\label{sect:MeanVol}

In this section we give the proof of Theorem \ref{thm:MeanVol}. The key step to prove it is the
following relation between the Poisson--Voronoi approximation and
the covariogram of a set $A$.
\begin{lemma}\label{lemma:meanV_d}
Let  $g_A(x)$ be the covariogram of a Borel set $A$ with
$\Vol(A)<\infty$. Then
\begin{equation} \label{eq:symmDiff}
\mathbb{E}\Vol(A \Delta A_\eta) = -2 \int\limits_{0}^\infty r^{d-1}
e^{- \kappa_dr^d}\,
\tilde{g}_A( \l^{- 1/d} r ) \,  dr \, ,
\end{equation}
where
\begin{equation}\label{def:Sg}
\tilde{g}_A (r) = \int\limits_{\mathbb{S}^{d-1}} \left(g_A(ru) -
g_A(0)\right) \, \cH_{d-1} (du).
\end{equation}
\end{lemma}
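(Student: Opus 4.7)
The plan is to transform the double integral in formula \eqref{1411} into a single radial integral against the spherical average of the covariogram. Everything comes from a change of variables plus polar coordinates; no hard estimate is needed.

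First, starting from \eqref{1411}, I would substitute $z = y - x$ in the inner integral (so $y = x + z$, $dy = dz$) and then interchange the order of integration via Fubini. Integrability is guaranteed because $|g_A(z) - g_A(0)| \le 2\Vol(A) < \infty$ and $e^{-\lambda \kappa_d \|z\|^d}$ is integrable on $\R^d$. This yields
\begin{equation*}
\mathbb{E}\Vol(A\Delta A_\eta) = 2\lambda \int_{\R^d} e^{-\lambda \kappa_d \|z\|^d} \Vol\bigl(\{x \in \R^d : x \notin A,\ x + z \in A\}\bigr)\, dz.
\end{equation*}
The inner volume is $\Vol((A-z)\setminus A) = \Vol(A-z) - \Vol((A-z) \cap A) = \Vol(A) - g_A(-z)$. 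Using the elementary symmetry $g_A(-z) = g_A(z)$ (the translation invariance of Lebesgue measure gives $\Vol((A-z) \cap A) = \Vol(A \cap (A+z))$) and the fact that $g_A(0) = \Vol(A)$, this becomes $g_A(0) - g_A(z)$. Hence
\begin{equation*}
\mathbb{E}\Vol(A\Delta A_\eta) = -2\lambda \int_{\R^d} e^{-\lambda \kappa_d \|z\|^d} \bigl(g_A(z) - g_A(0)\bigr)\, dz.
\end{equation*}

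Second, I would pass to polar coordinates $z = ru$ with $r \ge 0$ and $u \in \mathbb{S}^{d-1}$, so that $dz = r^{d-1}\, dr\, \cH_{d-1}(du)$. The exponential factor depends only on $r$, so the angular integral collapses precisely into the definition \eqref{def:Sg} of $\tilde g_A(r)$, giving
\begin{equation*}
\mathbb{E}\Vol(A\Delta A_\eta) = -2\lambda \int_0^\infty r^{d-1} e^{-\lambda \kappa_d r^d}\, \tilde g_A(r)\, dr.
\end{equation*}

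Finally, I would rescale with $r = \lambda^{-1/d} s$, so $r^{d-1}\, dr = \lambda^{-1} s^{d-1}\, ds$ and $\lambda \kappa_d r^d = \kappa_d s^d$. The prefactor $\lambda$ is absorbed by the $\lambda^{-1}$ from the Jacobian, producing exactly the claimed formula \eqref{eq:symmDiff}. There is no real obstacle here: the only points that require a moment's care are the symmetry of $g_A$, the identification of the inner volume as $g_A(0) - g_A(z)$, and justifying Fubini, all of which are immediate from $\Vol(A) < \infty$.
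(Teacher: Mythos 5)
Your proof is correct and follows essentially the same route as the paper: the paper performs the substitution $y = x - \lambda^{-1/d}z$ in one step, identifies the inner volume as $g_A(0)-g_A(\lambda^{-1/d}z)$ via the covariogram, and then passes to spherical coordinates, whereas you merely split the same change of variables into a translation followed by a radial rescaling at the end. The intermediate identifications (symmetry of $g_A$, $g_A(0)=\Vol(A)$, Fubini justified by $\Vol(A)<\infty$) are all handled correctly.
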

\begin{proof}
Replacing $y$ in \eqref{1411} by $x-\l^{-1/d} z$ we get
\begin{eqnarray*}
\mathbb{E}\Vol(A \Delta A_\eta) &=&
2\lambda\int\limits_{\mathbb{R}^d}\int\limits_{\mathbb{R}^d}e^{-\lambda\kappa_d\|y-x\|^d}\mathbf{1}\{y\in
A,x\in A^c\} \, dy \, dx
\\ &=&
2 \int\limits_{\mathbb{R}^d}e^{- \kappa_d\|z\|^d}\int\limits_{\mathbb{R}^d}\mathbf{1}\{x\in(A+ \l^{-1/d} z)\cap A^c\} \,dz \,dx
\\ &=&
2 \int\limits_{\mathbb{R}^d}e^{- \kappa_d\|z\|^d} \Vol((A+\l^{-1/d} z)\cap A^c) \, dz.
\end{eqnarray*}
By the definition of the covariogram 
$ \Vol((A+ \l^{-1/d} z)\cap A^c)=g_A(0)-g_A(\l^{-1/d} z) $. 
We  introduce spherical coordinates  $z=ru$, where
$r\in\mathbb{R}^+$ and $u\in\mathbb{S}^{d-1}$. This yields
$$
\mathbb{E}\Vol(A \Delta A_\eta) = -2 \int\limits_{0}^\infty r^{d-1}
e^{- \kappa_dr^d}\, \Big[ \int\limits_{\Sd}\left( g_A(\l^{- 1/d}
ru) - g_A(0) \right) \, \cH_{d-1} (du) \Big]\, dr \, .
$$
\end{proof}

\begin{corollary}\label{cor:L1conv}
For any measurable $A$ with $\Vol(A)<\infty$ it holds
$$
\mathbb{E}\Vol(A\Delta A_\eta)\to 0,\quad\lambda\to\infty.
$$
\end{corollary}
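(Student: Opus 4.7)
The plan is to use Lemma~\ref{lemma:meanV_d} as the starting point and push the limit $\lambda\to\infty$ inside the integral representation
$$
\mathbb{E}\Vol(A\Delta A_\eta)=-2\int_{0}^{\infty}r^{d-1}e^{-\kappa_d r^d}\,\tilde{g}_A(\lambda^{-1/d}r)\,dr,
$$
via the dominated convergence theorem. The key observation is that, although we no longer have the perimeter assumption of Theorem~\ref{thm:MeanVol}, the integrand still converges pointwise to zero and is dominated by an integrable function depending only on $\Vol(A)$.

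First, I would establish pointwise convergence. Since $\Vol(A)<\infty$, the covariogram $g_A(x)=\Vol((A+x)\cap A)$ is continuous at the origin: writing $g_A(x)=\int\mathbf{1}_A(y)\mathbf{1}_A(y-x)\,dy$, dominated convergence (with dominating function $\mathbf{1}_A$) gives $g_A(x)\to g_A(0)=\Vol(A)$ as $x\to 0$. In particular, for every fixed $r>0$ and $u\in\mathbb{S}^{d-1}$, $g_A(\lambda^{-1/d}ru)-g_A(0)\to 0$ as $\lambda\to\infty$. Applying dominated convergence on the sphere (with constant dominant $\Vol(A)$, justified below), this yields $\tilde{g}_A(\lambda^{-1/d}r)\to 0$ for each $r>0$.

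Next, I would produce the dominating function on the outer integral. Since $(A+x)\cap A\subseteq A$, we have $0\le g_A(x)\le g_A(0)=\Vol(A)$ for every $x$, so $|g_A(ru)-g_A(0)|\le\Vol(A)$. Integrating over the unit sphere gives $|\tilde{g}_A(r)|\le d\kappa_d\Vol(A)$ uniformly in $r$ and $\lambda$. Therefore
$$
\bigl|r^{d-1}e^{-\kappa_d r^d}\tilde{g}_A(\lambda^{-1/d}r)\bigr|\le d\kappa_d\Vol(A)\cdot r^{d-1}e^{-\kappa_d r^d},
$$
and the right-hand side is integrable on $[0,\infty)$ (with $\int_0^\infty r^{d-1}e^{-\kappa_d r^d}\,dr=1/(d\kappa_d)$).

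Applying the dominated convergence theorem then gives $\mathbb{E}\Vol(A\Delta A_\eta)\to 0$, which is the claim. There is no real obstacle here: the only point requiring care is the continuity of $g_A$ at the origin, which follows at once from $\Vol(A)<\infty$ via dominated convergence; once that is in hand, the uniform bound $|g_A|\le\Vol(A)$ supplies the dominant and the argument concludes immediately.
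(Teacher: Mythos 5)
Your argument is exactly the paper's: the proof given there is the one-line remark that the claim ``immediately follows from \eqref{eq:symmDiff} and the continuity of the set covariogram,'' and you have simply filled in the dominated convergence details (the uniform bound $0\le g_A\le\Vol(A)$, hence $|\tilde g_A|\le d\kappa_d\Vol(A)$, against the integrable weight $r^{d-1}e^{-\kappa_d r^d}$). That part is correct and complete.

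The one step that is not right as written is your justification of the continuity of $g_A$ at the origin. You argue by dominated convergence from the pointwise limit $\mathbf{1}_A(y-x)\to\mathbf{1}_A(y)$ as $x\to0$, but this pointwise convergence fails in general: it holds only at points $y$ in the interior of $A$ or of $A^c$, and for a measurable set with irregular boundary (e.g.\ a fat Cantor set) almost no point need be interior, so $\mathbf{1}_A(y-x)$ can oscillate as $x\to0$. The correct route is the $L^1$-continuity of translations: since $\mathbf{1}_A\in L^1(\R^d)$,
$$
|g_A(x)-g_A(0)|\le\int_{\R^d}\bigl|\mathbf{1}_A(y-x)-\mathbf{1}_A(y)\bigr|\,dy\longrightarrow 0,\quad x\to 0,
$$
which one proves by approximating $\mathbf{1}_A$ in $L^1$ by continuous compactly supported functions (equivalently, $g_A$ is the convolution of two $L^2$ functions and hence continuous). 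With this repair the rest of your argument goes through verbatim.
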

\begin{proof}
It immediately follows from \eqref{eq:symmDiff} and the
continuity of the set covariogram.
\end{proof}

\begin{proof}[Proof of Theorem \ref{thm:MeanVol}]
Using Lemma \ref{lemma:meanV_d} and substituting  $t$ for  $\kappa_d r^d$ we obtain
$$
\mathbb{E}\Vol(A\Delta A_\eta) = -\frac2{d\kappa_d}
\int\limits_{0}^\infty e^{-t}\, \tilde{g}_A \left((\lambda\kappa_d)^{-1/d}
t^{1/d}\right) \, dt\,.
$$
It follows from \eqref{0416} and the definition of $\tilde{g}_A$  that
 \begin{equation}\label{eq:SgLip}
|\tilde{g}_A (r)| \leqslant \frac 12  \cH_{d-1} (\Sd) \Per(A) r .
\end{equation}
Therefore, Lebesgue's Dominated Convergence Theorem and equations \eqref{eq:1}, \eqref{eq:2} yield
\begin{multline*}
\lim_{\lambda\to\infty} \mathbb{E}\Vol(A\Delta A_\eta) \l^{1/d} 
= -\frac2 d \kappa_d^{-1-1/d} \lim_{\lambda\to\infty}
\int\limits_{0}^\infty e^{-t} t^{1/d} \, \frac {\tilde{g}_A
((\lambda\kappa_d)^{-1/d} t^{1/d})
}{(\lambda\kappa_d)^{-1/d} t^{1/d}} \, dt
\\= -\frac2 d \kappa_d^{-1-1/d}\int\limits_{0}^\infty e^{-t} t^{1/d}dt\int\limits_{\Sd} \frac{\partial g_A}{\partial u}(0)\, \cH_{d-1} (du)
\\=\frac2 d \kappa_{d-1} \kappa_d^{-1-1/d} \Per (A)
\int\limits_{0}^\infty e^{-t} t^{1/d} \,  dt
=\frac2 d \kappa_{d-1} \kappa_d^{-1-1/d}\Gamma\Big(1+\frac1d\Big) \Per (A).
\end{multline*}

\end{proof}

\subsection{Asymptotics of higher moments}  \label{sect:Moments}

To prove Theorem~\ref{thm:asympt_volApprox} and Theorem \ref{0037}, we need a number of lemmas. In this section $C$  is always some constant independent of $\lambda$ and $A$. Our first statement is the following version of H\"older's inequality.

\begin{lemma}\label{2301}
For any events $A_1,\dots,A_m$ it holds
$$
\P\left(\bigcap_{r=1}^mA_r\right)\leqslant\prod_{r=1}^m
\left(\P(A_r)\right)^{1/m}.
$$
\end{lemma}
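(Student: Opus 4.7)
The plan is to give a one-line proof by applying the $m$-fold H\"older inequality to the indicator functions of the events. Recall the general form: for $p_1, \dots, p_m > 0$ with $\sum_{r=1}^m 1/p_r = 1$,
$$\E\Bigl|\prod_{r=1}^m X_r\Bigr| \leqslant \prod_{r=1}^m \bigl(\E|X_r|^{p_r}\bigr)^{1/p_r}.$$
Taking all exponents equal, $p_1 = \cdots = p_m = m$, and letting $X_r = \mathbf{1}_{A_r}$, one observes that $|X_r|^m = \mathbf{1}_{A_r}$ and $\prod_{r=1}^m X_r = \mathbf{1}_{\bigcap_{r=1}^m A_r}$; taking expectations on both sides produces the desired inequality at once. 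This matches the lemma's phrasing as a ``version of H\"older's inequality''.

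A purely elementary alternative, which avoids invoking H\"older altogether, is to exploit monotonicity of $\P$: for each fixed $s$, $\bigcap_{r=1}^m A_r \subseteq A_s$ and hence $\P\bigl(\bigcap_{r=1}^m A_r\bigr) \leqslant \P(A_s)$. Multiplying these $m$ bounds together yields $\P\bigl(\bigcap_{r=1}^m A_r\bigr)^m \leqslant \prod_{s=1}^m \P(A_s)$, and extracting the $m$-th root gives the claim. Either route is immediate, so there is no real obstacle to overcome; the usefulness of the lemma is not in its depth but in the symmetric form of the bound it supplies, which presumably will be invoked when $n$-th moments of $\Vol(A \Delta A_\eta)$ are expanded via the multivariate Slivnyak--Mecke formula \eqref{2352} and one needs to decouple the joint Voronoi-type events at $m$ distinct Poisson points into a product of their marginal probabilities.
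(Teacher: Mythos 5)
Your proposal is correct and the first argument (Hölder with $p_1=\dots=p_m=m$ applied to the indicators $\mathbf{1}_{A_r}$, using $\mathbf{1}_{A_r}^m=\mathbf{1}_{A_r}$) is exactly what the paper intends: the lemma is introduced as ``the following version of H\"older's inequality'' and its proof is omitted as immediate. Your elementary alternative is also valid and in fact gives the slightly stronger bound $\P\bigl(\bigcap_{r=1}^m A_r\bigr)\leqslant\min_s\P(A_s)$, of which the stated geometric-mean bound is a consequence; your reading of how the lemma is later used (to decouple joint Voronoi events at distinct Poisson points into products of marginals) is also accurate.
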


\begin{lemma}\label{1534}
Let $x_0,y_0\in\R^d$. For any $\varepsilon>0$ and $m\in\N$ the
following inequality holds:
$$
\int\limits_{(\R^d)^m} \left( \P (x_0,x_1,\dots,
x_m\in\upsilon_{\eta\cup\{y_0\}}(y_0))\right)^\varepsilon
\, dx_1\dots dx_m \leqslant
e^{-\varepsilon\lambda\kappa_d\|x_0-y_0\|^d/(m+1)}\left(\frac{m+1}{\varepsilon\lambda}\right)^m.
$$
\end{lemma}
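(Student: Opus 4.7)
The plan is to rewrite the probability inside the integral as a Poisson void probability on a union of balls, bound the volume of this union from below by an average of the individual ball volumes, and then separate variables to use the integral identity (0609).

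First I would note that the event $\{x_0,x_1,\dots,x_m\in\upsilon_{\eta\cup\{y_0\}}(y_0)\}$ is exactly the event that, for each $i=0,\dots,m$, no point of $\eta$ is strictly closer to $x_i$ than $y_0$ is. Equivalently, $\eta$ must avoid the open ball $B(x_i,\|x_i-y_0\|)$ for every $i$. Since $\eta$ is a homogeneous Poisson point process of intensity $\lambda$,
\begin{equation*}
\P\bigl(x_0,x_1,\dots,x_m\in\upsilon_{\eta\cup\{y_0\}}(y_0)\bigr) = \exp\Bigl(-\lambda\,\Vol\Bigl(\bigcup_{i=0}^m B(x_i,\|x_i-y_0\|)\Bigr)\Bigr).
\end{equation*}
The key elementary inequality is that for nonnegative numbers $a_0,\dots,a_m$ one has $\max_i a_i \ge \frac{1}{m+1}\sum_{i=0}^m a_i$, so
\begin{equation*}
\Vol\Bigl(\bigcup_{i=0}^m B(x_i,\|x_i-y_0\|)\Bigr) \ge \max_{0\le i\le m}\kappa_d\|x_i-y_0\|^d \ge \frac{\kappa_d}{m+1}\sum_{i=0}^m\|x_i-y_0\|^d.
\end{equation*}
Plugging this in and raising to the power $\varepsilon$ yields
\begin{equation*}
\bigl(\P(\,\cdot\,)\bigr)^\varepsilon \le \exp\Bigl(-\frac{\varepsilon\lambda\kappa_d}{m+1}\sum_{i=0}^m\|x_i-y_0\|^d\Bigr).
\end{equation*}

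Now the integrand factorises. I would peel off the $i=0$ term, which produces exactly the exponential prefactor $e^{-\varepsilon\lambda\kappa_d\|x_0-y_0\|^d/(m+1)}$, and then compute each of the $m$ remaining integrals separately. Applying formula (0609) with $c=\varepsilon\lambda\kappa_d/(m+1)$ gives
\begin{equation*}
\int\limits_{\R^d} e^{-\varepsilon\lambda\kappa_d\|x_i-y_0\|^d/(m+1)}\,dx_i = \frac{\kappa_d}{\varepsilon\lambda\kappa_d/(m+1)} = \frac{m+1}{\varepsilon\lambda},
\end{equation*}
and the product of these $m$ factors produces the claimed bound. The only subtlety is getting the max-over-sum inequality to line up correctly so that the sum over all $m+1$ indices (including $i=0$) appears in the exponent; once that is in place, the proof is a direct calculation with no further obstacles.
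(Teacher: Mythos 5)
Your proof is correct and arrives at exactly the same intermediate bound as the paper, namely $\bigl(\P(\cdot)\bigr)^{\varepsilon}\le\exp\bigl(-\tfrac{\varepsilon\lambda\kappa_d}{m+1}\sum_{i=0}^m\|x_i-y_0\|^d\bigr)$, after which the factorisation over $x_1,\dots,x_m$ and the application of \eqref{0609} are identical to the paper's computation. The only difference is how that bound is justified: the paper applies its probabilistic H\"older inequality (Lemma~\ref{2301}) to the $m+1$ events $\{x_i\in\upsilon_{\eta\cup\{y_0\}}(y_0)\}$, whereas you observe directly that these are void events for the balls $B(x_i,\|x_i-y_0\|)$ and that the volume of their union is at least the maximum, hence at least the average, of the individual volumes. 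The two justifications are equivalent here (H\"older applied to void probabilities of balls is precisely the statement that the union's volume dominates the average of the individual volumes), so yours is a slightly more elementary, geometric route to the same estimate.
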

\begin{proof} By Lemma \ref{2301}, we have
\begin{multline*}
\int\limits_{(\R^d)^m}\left( \P (x_0,x_1,\dots,
x_m\in\upsilon_{\eta\cup\{y_0\}}(y_0))\right)^\varepsilon
\, dx_1\dots dx_m
\\
\leqslant\left( \P(x_0\in\upsilon_{\eta\cup\{y_0\}}(y_0)) \right)^{\varepsilon/(m+1)}
\int\limits_{(\R^d)^m} \prod_{i=1}^m\left( \P (x_i\in\upsilon_{\eta\cup\{y_0\}}(y_0)) \right)^{\varepsilon/(m+1)} 
\, dx_1\dots dx_m
\\
=e^{-\varepsilon\lambda\kappa_d\|x_0-y_0\|^d/(m+1)}
\left[\ \int\limits_{\R^d} \, e^{-\varepsilon \lambda\kappa_d \|x-y_0\|^d/(m+1)} \, dx \right]^m.
\end{multline*}
Using \eqref{0609} completes the proof.
\end{proof}
\begin{lemma}\label{0623}
For any $a>0$
\begin{equation}\label{0628}
\int\limits_{\mathbb{R}^d} \int\limits_{A}e^{-a\lambda\|y-x\|^d}\,dy \,dx = \frac{\kappa_d  \Vol(A)}{a \lambda}
\end{equation}
and
\begin{equation}\label{0629}
\int\limits_{\mathbb{R}^d\setminus A} \int\limits_{A}e^{-a\lambda\|y-x\|^d}\,dy \,dx 
\leqslant C\frac{\Per(A)}{\lambda^{1+1/d}},\quad \lambda\to\infty.
\end{equation}
\end{lemma}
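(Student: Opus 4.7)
The two claims differ in flavor: \eqref{0628} is a straightforward consequence of Fubini and \eqref{0609}, while \eqref{0629} requires translating the double integral into an integral of the covariogram so that the Galerne bound \eqref{0416} can be applied.

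For \eqref{0628}, the plan is simply to swap the order of integration. The inner integral $\int_{\R^d} e^{-a\lambda\|y-x\|^d}\,dx$ does not depend on $y$ and equals $\kappa_d/(a\lambda)$ by \eqref{0609}, so integrating the constant against $\mathbf{1}_A(y)\,dy$ yields $\kappa_d\Vol(A)/(a\lambda)$.

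For \eqref{0629}, I would mimic the substitution used in the proof of Lemma~\ref{lemma:meanV_d}. Setting $w = x-y$, Fubini gives
\begin{equation*}
\int\limits_{\R^d\setminus A}\int\limits_A e^{-a\lambda\|y-x\|^d}\,dy\,dx
= \int\limits_{\R^d} e^{-a\lambda\|w\|^d}\,\Vol\bigl(\{y\in A:\,y+w\in A^c\}\bigr)\,dw.
\end{equation*}
The inner volume equals $\Vol(A\setminus(A-w)) = g_A(0)-g_A(w)$ using the symmetry of the covariogram. Passing to spherical coordinates $w=ru$ then rewrites the right-hand side as
\begin{equation*}
-\int\limits_0^\infty r^{d-1}e^{-a\lambda r^d}\,\tilde{g}_A(r)\,dr,
\end{equation*}
with $\tilde{g}_A$ defined in \eqref{def:Sg}.

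At this point the proof reduces to applying the pointwise bound \eqref{eq:SgLip}, $|\tilde{g}_A(r)|\le \tfrac12 d\kappa_d \Per(A)\, r$, and then evaluating $\int_0^\infty r^d e^{-a\lambda r^d}\,dr$ via the substitution $t=a\lambda r^d$, which produces a factor $\Gamma(1+1/d)/(d(a\lambda)^{1+1/d})$. Combining these constants gives a bound of the form $C\Per(A)\lambda^{-1-1/d}$ with $C$ depending only on $d$ and $a$. No asymptotic regime in $\lambda$ is actually needed; the estimate holds for all $\lambda>0$. I do not foresee any genuine obstacle: the only substantive ingredient is the Galerne Lipschitz inequality \eqref{0416}, already recorded in Section~\ref{sec:tools}.
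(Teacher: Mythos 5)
Your proof is correct, but it takes a more direct route than the paper for \eqref{0629}. The paper disposes of both claims in one line: it rescales the intensity, recognizing the left-hand sides as $\tfrac{1}{\lambda'}\E\Vol(A_{\eta'})$ and $\tfrac{1}{2\lambda'}\E\Vol(A\Delta A_{\eta'})$ for a Poisson process $\eta'$ of intensity $\lambda'=a\lambda/\kappa_d$ via \eqref{1410} and \eqref{1411}, and then quotes Theorem~\ref{thm:MeanVol} for the second bound --- which is why the paper's version of \eqref{0629} carries the qualifier ``$\lambda\to\infty$''. You instead redo the covariogram computation of Lemma~\ref{lemma:meanV_d} with the general exponent $a\lambda$ and stop at the Lipschitz estimate \eqref{eq:SgLip} (equivalently \eqref{0416}), rather than passing through the dominated-convergence limit and the derivative formula \eqref{eq:2} that underlie Theorem~\ref{thm:MeanVol}. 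Your computation checks out: $\Vol(A\cap(A^c-w))=g_A(0)-g_A(w)$, $|\tilde g_A(r)|\le\tfrac12 d\kappa_d\Per(A)\,r$ since $\cH_{d-1}(\Sd)=d\kappa_d$, and $\int_0^\infty r^d e^{-a\lambda r^d}\,dr=\Gamma(1+1/d)\,d^{-1}(a\lambda)^{-1-1/d}$, giving the explicit constant $C=\tfrac12\kappa_d\,\Gamma(1+1/d)\,a^{-1-1/d}$. What your approach buys is a genuinely non-asymptotic inequality valid for every $\lambda>0$ and a lighter set of prerequisites (only the Lipschitz property of $g_A$, not the full asymptotic theorem); what the paper's one-liner buys is brevity, at the cost of inheriting the $o(1)$ from Theorem~\ref{thm:MeanVol}. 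The dependence of your constant on $a$ is harmless, since the lemma is only ever invoked with $a$ determined by $n$ and $d$.
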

\begin{proof}
The first equation follows from \eqref{1410} after replacing $\lambda$ by $\lambda' a/ \kappa_d$. The second estimate
follows from \eqref{1411} after replacing $\lambda$ by  $\lambda' a/ \kappa_d$ and then applying Theorem~\ref{thm:MeanVol}.
\end{proof}

Introduce the notation $B_r^x$ for the closed ball with center $x\in\Rd$ and radius $r>0$ in Euclidean metric.

\begin{lemma}\label{1654}
Let $x_1,x_2,y_1,y_2\in\R^d$. If $B^{x_1}_{\|x_1-y_1\|}\cap B^{x_2}_{\|x_2-y_2\|}\ne\emptyset$, then
$$
\P\left(B^{x_1}_{\|x_1-y_1\|}\cap\eta=\emptyset,B^{x_2}_{\|x_2-y_2\|}\cap\eta=\emptyset\right)
\leqslant2\exp\left(-\frac{\lambda\kappa_d}{2^{2d+1}}\left(\|x_1-y_2\|^d+\|x_2-y_1\|^d\right)\right).
$$
\end{lemma}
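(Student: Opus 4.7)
The plan is to use the fact that both balls being empty of $\eta$ means their union is empty of $\eta$, so writing $r_i:=\|x_i-y_i\|$,
$$
\P\bigl(B^{x_1}_{r_1}\cap\eta=\emptyset,\;B^{x_2}_{r_2}\cap\eta=\emptyset\bigr)\;=\;e^{-\lambda\Vol(B^{x_1}_{r_1}\cup B^{x_2}_{r_2})}.
$$
Thus the lemma reduces to the deterministic volume lower bound
$$
\Vol\bigl(B^{x_1}_{r_1}\cup B^{x_2}_{r_2}\bigr)\;\ge\;\frac{\kappa_d}{2^{2d+1}}\bigl(\|x_1-y_2\|^d+\|x_2-y_1\|^d\bigr),
$$
valid under the assumption that the two balls intersect. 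In fact the approach yields the stated estimate even without the leading factor $2$, which is therefore slack.

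First I would establish the intermediate inequality $\Vol(B^{x_1}_{r_1}\cup B^{x_2}_{r_2})\ge \tfrac{\kappa_d}{2}(r_1^d+r_2^d)$ using only the trivial chain $\Vol(C\cup D)\ge\max(\Vol C,\Vol D)\ge\tfrac12(\Vol C+\Vol D)$ applied to the two balls; note that no information about their relative positions is needed for this step. Next I would use the intersection hypothesis, which gives $\|x_1-x_2\|\le r_1+r_2$, and couple it with the triangle inequality to obtain $\|x_1-y_2\|\le r_1+2r_2$ and $\|x_2-y_1\|\le 2r_1+r_2$. Applying the convexity estimate $(a+b)^d\le 2^{d-1}(a^d+b^d)$ to each and summing yields
$$
\|x_1-y_2\|^d+\|x_2-y_1\|^d\;\le\;2^{d-1}(2^d+1)(r_1^d+r_2^d)\;\le\;2^{2d}(r_1^d+r_2^d).
$$
Combining this with the volume inequality and substituting into the void probability above completes the argument.

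The main obstacle, if any, is purely bookkeeping of constants: all ingredients are one-line elementary estimates (the \textbf{max} bound for $\Vol(C\cup D)$, a single application of the triangle inequality, and a power-mean inequality), so no genuine technical difficulty arises. One only needs to track the powers of $2$ carefully in order to confirm that the exponent $2d+1$ in the denominator of the final constant is indeed the correct one.
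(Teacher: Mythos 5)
Your argument is correct, and it even sharpens the lemma: the factor $2$ in front of the exponential is not needed. Checking the constants: the trivial bound $\Vol(C\cup D)\ge\max(\Vol C,\Vol D)\ge\tfrac12(\Vol C+\Vol D)$ gives $\Vol(B^{x_1}_{r_1}\cup B^{x_2}_{r_2})\ge\tfrac{\kappa_d}{2}(r_1^d+r_2^d)$; the intersection hypothesis plus the triangle inequality gives $\|x_1-y_2\|\le r_1+2r_2$ and $\|x_2-y_1\|\le 2r_1+r_2$; and $(a+b)^d\le 2^{d-1}(a^d+b^d)$ yields $\|x_1-y_2\|^d+\|x_2-y_1\|^d\le 2^{d-1}(1+2^d)(r_1^d+r_2^d)\le 2^{2d}(r_1^d+r_2^d)$. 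Substituting into the exact void probability $e^{-\lambda\Vol(B^{x_1}_{r_1}\cup B^{x_2}_{r_2})}$ indeed produces the exponent $\lambda\kappa_d 2^{-2d-1}(\|x_1-y_2\|^d+\|x_2-y_1\|^d)$.

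Your route is genuinely different from the paper's. The paper does not compute the volume of the union at all: it uses the intersection hypothesis to show $\|x_1-y_2\|/4$ and $\|x_2-y_1\|/4$ are both at most $\max(r_1,r_2)$, so that the larger of the two original balls contains both auxiliary balls $B^{x_i}_{\|x_1-y_2\|/4}$ and $B^{x_i}_{\|x_2-y_1\|/4}$ for the appropriate $i$; it then applies a union bound over $i=1,2$ (the source of the factor $2$) and the H\"older-type Lemma 4.2 with exponent $1/2$ to decouple the two auxiliary void events (the source of the extra factor $\tfrac12$ in the exponent, i.e.\ the $+1$ in $2^{2d+1}$). Your proof replaces both of these probabilistic devices with a single deterministic volume inequality and the exact formula for the void probability of a union, which is cleaner, avoids invoking Lemma 4.2 here, and removes the prefactor $2$. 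The paper's version has the mild advantage of staying within the same Hölder-decoupling framework used throughout Section 4, but nothing downstream depends on that, and the constant $2^{2d+1}$ is in any case only used up to unspecified constants, so your improvement is harmless.
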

\begin{proof}
 Since $B^{x_1}_{\|x_1-y_1\|}\cap B^{x_2}_{\|x_2-y_2\|}\ne\emptyset$, it follows from the triangle inequality that
$$
\frac{\|x_1-y_2\|}{4},\frac{\|x_2-y_1\|}{4}\leqslant\max\left(\|x_1-y_1\|,\|x_2-y_2\|\right).
$$
Therefore, by Lemma \ref{2301} and stationarity of $\eta$ we have
\begin{multline*}
\P\left(B^{x_1}_{\|x_1-y_1\|}\cap\eta=\emptyset,B^{x_2}_{\|x_2-y_2\|}\cap\eta=\emptyset\right)
\\\leqslant\P\left(B^{x_1}_{\|x_1-y_2\|/4}\cap\eta=\emptyset,B^{x_1}_{\|x_2-y_1\|/4}\cap\eta=\emptyset\,\text{or}\,B^{x_2}_{\|x_1-y_2\|/4}\cap\eta=\emptyset,B^{x_2}_{\|x_2-y_1\|/4}\cap\eta=\emptyset\right)
\\\leqslant \sum_{i=1}^2 \P\left(B^{x_i}_{\|x_1-y_2\|/4}\cap\eta=\emptyset,B^{x_i}_{\|x_2-y_1\|/4}\cap\eta=\emptyset\right)
\\\leqslant \sum_{i=1}^2 \left( \P\left(B^{x_i}_{\|x_1-y_2\|/4}\cap\eta=\emptyset\right)\P\left(B^{x_i}_{\|x_2-y_1\|/4}\cap\eta=\emptyset\right)\right)^{1/2}
\\=2\exp\left(-\frac{\lambda\kappa_d}{2^{2d+1}}\left(\|x_1-y_2\|^d+\|x_2-y_1\|^d\right)\right).
\end{multline*}
\end{proof}

\begin{lemma}\label{1952}
For any $x_1,y_1,\dots,x_n,y_n\in\R^d$ it holds
\begin{multline*}
\P\left(B^{x_r}_{\|x_r-y_r\|}\cap\eta=\emptyset,r=1,\dots,n\right)\leqslant \exp\left(-\lambda\kappa_d\sum_{r=1}^n\|x_r-y_r\|^d\right)
\\+2\sum_{s<t}\exp\left(-\frac{\lambda\kappa_d}{n+1}\sum_{r=1}^n\|x_r-y_r\|^d\right)\exp\left(-\frac{\lambda\kappa_d}{2^{2d+1}(n+1)}\left(\|x_s-y_t\|^d+\|x_t-y_s\|^d\right)\right).
\end{multline*}
\end{lemma}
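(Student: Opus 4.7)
The plan is to distinguish two cases according to whether the closed balls $B_r := B^{x_r}_{\|x_r-y_r\|}$, $r=1,\dots,n$, are pairwise disjoint. In both cases the starting point is the Poisson void formula
$$\P\Big(B_r\cap\eta=\emptyset,\ r=1,\dots,n\Big)=\exp\Big(-\lambda\Vol\Big(\bigcup_{r=1}^n B_r\Big)\Big).$$

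If the balls $B_r$ are pairwise disjoint, then $\Vol(\bigcup_r B_r)=\kappa_d\sum_r\|x_r-y_r\|^d$, and the left--hand side of the desired inequality equals exactly the first exponential on the right--hand side; since every remaining summand is nonnegative, the bound is immediate.

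Otherwise I would fix any pair $s<t$ with $B_s\cap B_t\ne\emptyset$ and regard the event in question as the intersection of $n+1$ events: the $n$ individual void events $\{B_r\cap\eta=\emptyset\}$ together with the (redundant) pair event $\{B_s\cap\eta=\emptyset,\,B_t\cap\eta=\emptyset\}$. Applying Lemma~\ref{2301} with $m=n+1$ then yields
$$\P\Big(B_r\cap\eta=\emptyset,\ r=1,\dots,n\Big)\leqslant\Big[\prod_{r=1}^n e^{-\lambda\kappa_d\|x_r-y_r\|^d}\cdot\P\bigl(B_s\cap\eta=\emptyset,\,B_t\cap\eta=\emptyset\bigr)\Big]^{1/(n+1)}.$$
By Lemma~\ref{1654} the last factor is bounded by $2\exp\bigl(-\frac{\lambda\kappa_d}{2^{2d+1}}(\|x_s-y_t\|^d+\|x_t-y_s\|^d)\bigr)$. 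Multiplying everything out and absorbing $2^{1/(n+1)}\leqslant 2$ reproduces precisely the $(s,t)$--summand of the double sum in the claim, which is majorised by the full sum (and by adding the nonnegative first term).

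The only genuinely delicate point is the choice of H\"older exponent $m=n+1$ rather than $m=n$: promoting the overlapping pair event to an extra, $(n+1)$--st factor is exactly what produces the $\frac{1}{n+1}$ in the exponents of the second term on the right--hand side and aligns the estimate with the shape of the lemma. Beyond this bit of bookkeeping I anticipate no further obstacle.
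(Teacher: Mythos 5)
Your proposal is correct and coincides with the paper's own argument: the same case distinction (pairwise disjoint balls versus an overlapping pair), the same application of Lemma~\ref{2301} with $n+1$ events obtained by adjoining the redundant pair event, and the same final appeal to Lemma~\ref{1654}, with $2^{1/(n+1)}\leqslant 2$ absorbed into the constant. No gaps.
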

\begin{proof}
If the balls $B^{x_r}_{\|x_r-y_r\|},\,r=1,\dots,n$ are pairwise disjoint then we obviously have
$$
\P\left(B^{x_r}_{\|x_r-y_r\|}\cap\eta=\emptyset,r=1,\dots,n\right)=\exp\left(-\lambda\kappa_d\sum_{r=1}^n\|x_r-y_r\|^d\right).
$$
Suppose that for some indices $s\ne t$ it holds $B^{x_s}_{\|x_s-y_s\|}\cap B^{x_t}_{\|x_t-y_t\|}\ne\emptyset$. Applying Lemma~\ref{2301}, we get
\begin{multline*}
\P\left(B^{x_r}_{\|x_r-y_r\|}\cap\eta=\emptyset,r=1,\dots,n\right)
\\\leqslant   \left( \P\left(B^{x_s}_{\|x_s-y_s\|}\cap\eta=\emptyset,\,B^{x_t}_{\|x_t-y_t\|}\cap\eta=\emptyset\right)\right)^{1/(n+1)}  \prod_{r=1}^n \left( \P\left(B^{x_r}_{\|x_r-y_r\|}\cap\eta=\emptyset\right) \right)^{1/(n+1)}
\\=\exp\left(-\frac{\lambda\kappa_d}{n+1}\sum_{r=1}^n\|x_r-y_r\|^d\right)\left( \P\left(B^{x_s}_{\|x_s-y_s\|}\cap\eta=\emptyset,\,B^{x_t}_{\|x_t-y_t\|}\cap\eta=\emptyset\right)\right)^{1/(n+1)}.
\end{multline*}
It remains to apply  Lemma~\ref{1654} to finish the proof.
\end{proof}

\begin{proof}[Proof of Theorem~\ref{thm:asympt_volApprox}]

We have
\begin{multline}\label{0714}
\E \Vol^n(A_\eta) =
\E\int\limits_{(\R^d)^{n}} \I( \exists (y_1,\dots, y_n)\in (\eta\cap A)^n\,:\,x_i\in\upsilon_\eta(y_i), i=1,\dots n) \, dx_{1}\dots dx_{n}
\\=\sum_{i=1}^n\sum_{m_1+\dots+m_i=n}B_{n,i,m_1,\ldots,m_i}\beta_{i,m_1,\ldots,m_i},
\end{multline}
where
\begin{multline*}
\beta_{i,m_1,\ldots,m_i}=\int\limits_{(\R^d)^{n}}
\E \! \sum_{(y_1,\dots,y_i)\in(\eta\cap A)^i_{\ne}}  \!\! 
\I\left( x_1,\dots,x_{m_1}\in\upsilon_\eta(y_1), \dots ,x_{n-m_i+1},\dots,x_n\in\upsilon_\eta(y_i)\right)  
\\ \, dx_{1}\dots dx_{n}
\end{multline*}
and $B_{n,i,m_1,\ldots,m_i}$ denotes the number of ways to divide the set  $\{1,2,\dots,n\}$ into $i$ subsets of size $m_1,\ldots,m_i$. It it clear that
\begin{equation}\label{0942}
B_{n,n,1,\dots,1}=1.
\end{equation}

Fix some $i$ and $m_1,\dots,m_i$. Using the Slivnyak-Mecke formula \eqref{2352} we
get
\begin{multline*} 
\beta_{i,m_1,\ldots,m_i}=\lambda^{i}\int\limits_{(\R^d)^n}\int\limits_{A^i}
 \P \left( x_1,\dots,x_{m_1}\in\upsilon_{\eta\cup \tilde{y}_{i} }(y_1),\dots, x_{n-m_i+1},\dots,x_n\in\upsilon_{\eta\cup \tilde{y}_{i}}(y_i)\right)
\\ dy_1\dots dy_i \, dx_1\dots dx_n ,
\end{multline*}
where $\tilde{y}_{i}=\{ y_1,\ldots, y_{i} \}$. Taking into account that
$\upsilon_{\eta\cup \tilde{y}_{i} }(y_r) \subset \upsilon_{\eta\cup \{y_r\} }(y_r) $,
and using Fubini's theorem, Lemma~\ref{2301} and Lemma~\ref{1534}  we obtain 
\begin{eqnarray*}
\beta_{i,m_1,\ldots,m_i}
&\leqslant&
\lambda^{i}\int\limits_{A^i}  \prod_{r=1}^i\int\limits_{(\R^d)^{m_r}}
\left( \P\left( x_1,\dots,x_{m_r}\in\upsilon_{\eta\cup\{y_r\}}(y_r)  \right)\right)^{1/i}
\, dx_1\dots dx_{m_r} \, dy_1\dots dy_i
\\  &\leqslant&
\lambda^{i}\int\limits_{A^i} \prod_{r=1}^i   \Big( \frac{im_r}{ \l} \Big)^{m_r-1}
\int\limits_{\R^d}  \Big( e^{- \frac 1i \l \kappa_d \| x_1-y_r\|^d /m_r} \Big)  \, dx_1 \, dy_1\dots dy_i .
\end{eqnarray*}
By \eqref{0628} we get
$$
\beta_{i,m_1,\ldots,m_i}\leqslant
C\Vol^i(A)\lambda^{i-\sum_{r=1}^im_r}=C\Vol^i(A)\lambda^{i-n}.
$$

The maximum order of $\lambda$ is achieved for $i=n$,  which together with \eqref{0714} and \eqref{0942}  implies
\begin{multline*}
\E \Vol^n(A_\eta) 
\leqslant 
\lambda^{n} \int\limits_{(\R^d)^n} \int\limits_{A^n}
\P\left( x_r\in\upsilon_{\eta\cup \tilde{y}_{n}}(y_r),\; r=1,\dots,n\right)
\, dy_1\dots dy_n \, dx_1\dots dx_n
\\ +
C(\Vol(A))^{n-1}\lambda^{-1},\quad\lambda\to\infty.
\end{multline*}

It is clear that
$$
\P\left( x_r\in\upsilon_{\eta\cup \tilde{y}_{n}}(y_r),\; r=1,\dots,n\right)\leqslant\P\left(B^{x_r}_{\|x_r-y_r\|}\cap\eta=\emptyset,r=1,\dots,n\right).
$$
Therefore, by Lemma~\ref{1952},
\begin{equation}\label{1048}
\E \Vol^n(A_\eta) \leqslant v_n+2\sum_{s<t}v_{n,s,t}+C(\Vol(A))^{n-1}\lambda^{-1},\quad\lambda\to\infty,
\end{equation}
where
$$
v_n=\lambda^{n}\int\limits_{(\R^d)^n} \int\limits_{A^n} \exp\left(-\lambda\kappa_d\sum_{r=1}^n\|x_r-y_r\|^d\right)
\, dy_1\dots dy_n \, dx_1\dots dx_n,
$$
and
\begin{multline*}
v_{n,s,t}=\lambda^{n}\int\limits_{(\R^d)^n} \int\limits_{A^n} 
\exp\left(-\frac{\lambda\kappa_d}{n+1}\sum_{r=1}^n\|x_r-y_r\|^d\right)
\\ \times \exp\left(-\frac{\lambda\kappa_d}{2^{2d+1}(n+1)}\left(\|x_s-y_t\|^d+\|x_t-y_s\|^d\right)\right)
\, dy_1\dots dy_n \, dx_1\dots dx_n .
\end{multline*}
By formula~\eqref{1410},
\begin{equation}\label{1049}
v_n=\Vol^n(A).
\end{equation}
Let us estimate $v_{n,s,t}$. Using Fubini, it follows from \eqref{0628} that
\begin{eqnarray*}
v_{n,s,t}
&\leqslant&  C\Vol^{n-2}(A)\lambda^{2}
\int\limits_{\R^d}  \int\limits_{\R^d} \int\limits_{A} \int\limits_{A}
\exp\left(-\frac{\lambda\kappa_d}{(n+1)}\left(\|x_s-y_s\|^d+\|x_t-y_t\|^d\right) \right)
\\ &&
\hskip2cm \times\exp\left(-\frac{\lambda\kappa_d}{2^{2d+1}(n+1)}\left(\|x_s-y_t\|^d+\|x_t-y_s\|^d\right)\right)
\, dy_t  \, dy_s  \, dx_t  \, dx_s
\\
&\leqslant&  C\Vol^{n-2}(A)\lambda^{2}
\int\limits_{\R^d}  \int\limits_{A} 
\exp\left(-\frac{\lambda\kappa_d}{(n+1)}\left(\|x_s-y_s\|^d\right) \right)
\\ &&
\hskip1.2cm \times  \int\limits_{\R^d}  \int\limits_{\R^d} \exp\left(-\frac{\lambda\kappa_d}{2^{2d+1}(n+1)}\left(\|x_s-y_t\|^d+\|x_t-y_s\|^d\right)\right)
\, dy_t \, dx_t   \, dy_s   \, dx_s .
\end{eqnarray*}
Furthermore, by \eqref{0609},
$$
v_{n,s,t}\leqslant C\Vol^{n-2}(A)
\int\limits_{\R^d} \int\limits_{A} \exp\left(-\frac{\lambda\kappa_d}{(n+1)}\|x_s-y_s\|^d\right) \, dy_s   \, dx_s ,
$$
and applying \eqref{0628} again, we get
$$
v_{n,s,t}\leqslant C\Vol^{n-1}(A)\lambda^{-1}.
$$
Combining this with the estimate~\eqref{1048} and with \eqref{1049}, we
get
$$
\E \Vol^n(A_\eta) \leqslant \Vol^n(A)+C\Vol^{n-1}(A)\lambda^{-1}, \quad \lambda \to \infty.
$$
The application of Lyapunov's  inequality
$$
\E \Vol^n(A_\eta)\geqslant (\E\Vol(A_\eta))^n= \Vol^n(A)
$$
finishes the proof.

\end{proof}

\begin{proof}[Proof of Theorem~\ref{0037}]
We have
\begin{equation}\label{0032}
\E \Vol^n(A\Delta A_\eta)=\E\left( \Vol(A\setminus
A_\eta)+\Vol(A_\eta\setminus A)\right)^n
\\=\sum_{k=0}^n \binom n k u_k,
\end{equation}
where
\begin{equation}\label{1516}
u_k =  \E \int\limits_{A^{n-k}} \int\limits_{(\R^d\setminus A)^k} 
\I(x_1,\dots x_k\in  A_\eta,x_{k+1},\dots,x_n\not\in A_\eta)  
\, dx_1\dots dx_n.
\end{equation}
 Fix some $k$. We have  
\begin{multline}\label{0736}
u_k = \E  \int\limits_{A^{n-k}}  \int\limits_{(\R^d\setminus A)^k}
\I\Big( \exists (y_1,\dots, y_k)\in (\eta\cap A)^k, (y_{k+1},\dots, y_n)\in(\eta\setminus A)^{n-k}\,:
\\ \hfill x_i\in\upsilon_\eta(y_i), i=1,\dots n \Big) 
\, dx_1\dots dx_n
\\=\sum_{i=1}^k\sum_{j=1}^{n-k}\sum_{m_1+\dots+m_i=k}\sum_{l_1+\dots+l_j=n-k}B_{k,i,m_1,\ldots,m_i}B_{n-k,j,l_1,\ldots,l_j}\beta_{i,j,m_1,\ldots,m_i,l_1,\ldots,l_j},
\end{multline}
where
\begin{multline*}
 \beta_{i,j,m_1,\ldots,m_i,l_1,\ldots,l_j}
 =
 \int\limits_{A^{n-k}}   \int\limits_{(\R^d\setminus A)^k}
\E \sum_{(y_1,\dots,y_i)\in(\eta\cap A)^i_{\ne}}   \sum_{(y_{i+1},\dots,y_{i+j})\in(\eta\setminus A)^j_{\ne}}
\\     \hfill 
\I\left( x_1,\dots,x_{m_1}\in\upsilon_\eta(y_1),  \dots  ,
x_{n-l_j+1},\dots,x_{n}\in\upsilon_\eta(y_{i+j}) \right)
\\   \hfill
 dx_1\dots dx_n 
\end{multline*}
and $B_{k,i,m_1,\ldots,m_i},B_{n-k,j,l_1,\ldots,l_j}$ are the same combinatorial coefficients as in the proof of Theorem~\ref{thm:asympt_volApprox}.

Fix some $i,j$, and $m_1,\dots,m_i,l_1,\dots,l_j$. Using the Slivnyak-Mecke formula 
\eqref{2352} twice we get
\begin{multline*} 
\beta_{i,j,m_1,\ldots,m_i,l_1,\ldots,l_j}
=
\lambda^{i+j}  \int\limits_{A^{n-k}} \int\limits_{(\R^d\setminus A)^k} \int\limits_{(\R^d\setminus A)^{j}} \int\limits_{A^i} 
\\ \hfill
\P \left( x_1,\dots,x_{m_1}\in\upsilon_{\eta\cup \tilde{y}_{i+j} }(y_1),\dots,
x_{n-l_j+1},\dots,x_{n}\in\upsilon_{\eta\cup \tilde{y}_{i+j}}(y_{i+j}) \right)
\\   \hfill
 dy_1\dots dy_{i+j} \,   dx_1\dots dx_n ,
\end{multline*}
where $\tilde{y}_{i+j}=\{ y_1,\ldots, y_{i+j} \}$. 
By Fubini and Lemma~\ref{2301},
\begin{multline*}
\beta_{i,j,m_1,\ldots,m_i,l_1,\ldots,l_j}
\leqslant
\lambda^{i+j} \int\limits_{(\R^d\setminus A)^{j}} \int\limits_{A^i}
\\ \hfill
\prod_{r=1}^i\int\limits_{(\R^d\setminus A)^{m_r}}\left( \P\left( x_1,\dots,x_{m_r}\in\upsilon_{\eta\cup\{y_r\}}(y_r)
\right)\right)^{1/(i+j)} \, dx_1\dots dx_{m_r}
\\ \hfill \times\prod_{r=1}^j\int\limits_{A^{l_r}}\left( \P\left( x_1,\dots,x_{l_r}\in\upsilon_{\eta\cup\{y_{i+r}\}}(y_{i+r}) \right)\right)^{1/(i+j)}
\, dx_1\dots dx_{l_r}
\\   \hfill
dy_1\dots dy_{i+j} .
\end{multline*}
Using Lemma~\ref{1534} and \eqref{0629}, we get asymptotically as $\lambda\to\infty$
\begin{multline*}
\beta_{i,j,m_1,\ldots,m_i,l_1,\ldots,l_j}\leqslant
C\Per(A)^{i+j}
\lambda^{i+j+\sum_{r=1}^i(-m_r-1/d)+\sum_{r=1}^j(-l_r-1/d)} \\
=C\Per(A)^{i+j}\lambda^{-n+i+j-(i+j)/d}.
\end{multline*}
The maximum order of $\lambda$ is achieved for $i=k$, $j=n-k$, and the next term of maximum order is achieved for $i+j=n-1$,  which together with \eqref{0736} and \eqref{0942} implies
\begin{multline*}
u_k 
\leqslant 
\lambda^{n}  \int\limits_{A^{n-k}}\int\limits_{(\R^d\setminus A)^k}  \int\limits_{(\R^d\setminus A)^{n-k}} \int\limits_{A^k}
 \P\left( x_r\in\upsilon_{\eta\cup \tilde{y}_{n}}(y_r),\; r=1,\dots,n\right) \, dy_1\dots dy_n  \, dx_1\dots dx_n
\\ + C\Per(A)^{n-1}\lambda^{-1-(n-1)/d}
\end{multline*}
asymptotically as $\lambda\to\infty$. It is clear that
$$
\P\left( x_r\in\upsilon_{\eta\cup \tilde{y}_{n}}(y_r),\; r=1,\dots,n\right)\leqslant\P\left(B^{x_r}_{\|x_r-y_r\|}\cap\eta=\emptyset,r=1,\dots,n\right).
$$
Therefore, by Lemma~\ref{1952}, asymptotically as $\lambda\to\infty$,
\begin{equation}\label{2139}
u_k\leqslant v_k+2\sum_{s<t}v_{k,s,t}+C\Per(A)^{n-1}\lambda^{-1-(n-1)/d},
\end{equation}
where
\begin{multline*}
v_k
=
\lambda^{n} \int\limits_{A^{n-k}} \int\limits_{(\R^d\setminus A)^k}  \int\limits_{(\R^d\setminus A)^{n-k}} \int\limits_{A^k} 
 \exp\left(-\lambda\kappa_d\sum_{r=1}^n\|x_r-y_r\|^d\right)
\,  dy_{1}\dots dy_{n} \, dx_1 \dots dx_n ,
\end{multline*}
and
\begin{multline*}
v_{k,s,t}
=
\lambda^{n} \int\limits_{A^{n-k}} \int\limits_{(\R^d\setminus A)^k}  \int\limits_{(\R^d\setminus A)^{n-k}} \int\limits_{A^k} 
 \exp\left(-\frac{\lambda\kappa_d}{n+1}\sum_{r=1}^n\|x_r-y_r\|^d\right)
 \\ \times \exp\left(-\frac{\lambda\kappa_d}{2^{2d+1}(n+1)}\left(\|x_s-y_t\|^d+\|x_t-y_s\|^d\right)\right)
\,  dy_{1}\dots dy_{n} \, dx_1 \dots dx_n .
\end{multline*}
By the identity~\eqref{1411},
\begin{equation}\label{2138}
v_k=2^{-n}(\mathbb{E}\Vol(A \Delta A_\eta))^n.
\end{equation}
Let us estimate $v_{k,s,t}$. For instance, we assume that
$s\leqslant k$ and $t\geqslant k+1$ (other cases are treated in
the same way). In the same way as in the proof of Theorem~\ref{thm:asympt_volApprox}, we obtain by inequality~\eqref{0629}
\begin{eqnarray*}
v_{k,s,t}
&\leqslant &
C \Per(A)^{n-2}\lambda^{2-(n-2)/d}
\int\limits_{\R^d}   \int\limits_{\R^d\setminus A}  \int\limits_{\R^d} \int\limits_{A}  
\exp\left(-\frac{\lambda\kappa_d}{(n+1)}\left(\|x_s-y_s\|^d\right)\right)
\\ && \hskip2cm  \times 
\exp\left(-\frac{\lambda\kappa_d}{2^{2d+1}(n+1)}\left(\|x_s-y_t\|^d+\|x_t-y_s\|^d\right)\right)
\, dy_s\, dy_t \, dx_s  \,  dx_t
\end{eqnarray*}
as $\lambda \to \infty$. Furthermore, by~\eqref{0609},
$$
v_{k,s,t}\leqslant C\Per(A)^{n-2}\lambda^{-(n-2)/d}
\int\limits_{\R^d\setminus A} \int\limits_{A} 
\exp\left(-\frac{\lambda\kappa_d}{(n+1)}\|x_s-y_s\|^d\right)  \, dy_s dx_s,\quad \lambda\to\infty,
$$
and applying~\eqref{0629} again, we get
$$
v_{k,s,t}\leqslant C\Per(A)^{n-1}\lambda^{-1-(n-1)/d},\quad \lambda\to\infty.
$$
Combining this with \eqref{2139} and \eqref{2138}, we
get
\begin{equation}\label{1249}
u_k\leqslant2^{-n}(\mathbb{E}\Vol(A \Delta A_\eta))^n+C\Per(A)^{n-1}\lambda^{-1-(n-1)/d},\quad \lambda\to\infty.
\end{equation}
Inserting this into \eqref{0032} we otain 
$$
\E \Vol^n(A\Delta A_\eta)\leqslant (\mathbb{E}\Vol(A \Delta
A_\eta))^n+C\Per(A)^{n-1}\lambda^{-1-(n-1)/d},\quad \lambda\to\infty.
$$

The application of Lyapunov's  inequality
$$
\E \Vol^n(A\Delta A_\eta)\geqslant  \left(  \mathbb{E}\Vol(A\Delta
A_\eta)\right)^n
$$
finishes the proof.
\end{proof}

\begin{proof}[Proof of Corollary~\ref{cor:var}]
As was mentioned above, the second inequality immediately follows from  Theorem~\ref{0037}. To prove the first one, let us again combine~\eqref{1249} and \eqref{0032} now for $n=2$ and $k=0,1,2$. We get for sufficiently large $\lambda$
$$
\E\Vol^2(A\setminus A_\eta)+\E\Vol^2(A_\eta\setminus A)\leqslant\frac12(\mathbb{E}\Vol(A \Delta A_\eta))^2+2C\Per(A)\lambda^{-1-1/d},
$$
$$
2\E\left( \Vol(A\setminus A_\eta)\Vol(A_\eta\setminus A)\right)\leqslant\frac12(\mathbb{E}\Vol(A \Delta A_\eta))^2+2C\Per(A)\lambda^{-1-1/d}.
$$
Combining this with Lyapunov's  inequality
$$
\E\Vol^2(A\setminus A_\eta)+\E\Vol^2(A_\eta\setminus A)+2\E\left( \Vol(A\setminus A_\eta)\Vol(A_\eta\setminus A)\right)\\\geqslant(\mathbb{E}\Vol(A \Delta A_\eta))^2,
$$
we obtain for sufficiently large $\lambda$
$$
\E\Vol^2(A\setminus A_\eta)+\E\Vol^2(A_\eta\setminus A)-2\E\left( \Vol(A\setminus A_\eta)\Vol(A_\eta\setminus A)\right)\\\leqslant4C\Per(A)\lambda^{-1-1/d},
$$
which together with~\eqref{1543} completes the proof.
\end{proof}


\end{document}